\newtheorem{theorem}{Theorem}
\newtheorem{lemma}[theorem]{Lemma}
\newtheorem{corollary}[theorem]{Corollary}
\newtheorem{proposition}[theorem]{Proposition}
\newtheorem{sublemma}[theorem]{Sublemma}
\theoremstyle{remark}
\newcommand{\del}{\partial}
\newcommand{\abs}[1]{\left\lvert #1 \right\rvert}
\DeclareMathOperator{\Vol}{Vol}
\DeclareMathOperator{\dist}{dist}
\DeclareMathOperator{\ind}{ind}
\DeclareMathOperator{\hyp}{hyp}
\DeclareMathOperator{\ess}{ess}
\DeclareMathOperator{\rel}{rel}
\begin{document}

\title[Simplicial volume and Morse broken trajectories]{Using simplicial volume to count maximally broken Morse trajectories}
\author{Hannah Alpert}
\address{MIT\\ Cambridge, MA 02139 USA}
\email{hcalpert@math.mit.edu}
\subjclass[2010]{53C23 (58E05, 57N80)}
\begin{abstract}
Given a closed Riemannian manifold of dimension $n$ and a Morse-Smale function, there are finitely many $n$-part broken trajectories of the negative gradient flow.  We show that if the manifold admits a hyperbolic metric, then the number of $n$-part broken trajectories is always at least the hyperbolic volume.  The proof combines known theorems in Morse theory with lemmas of Gromov about simplicial volumes of stratified spaces.
\end{abstract}
\maketitle

\section{Introduction}

Let $M$ be a closed manifold of dimension $n$, and let $f : M \rightarrow \mathbb{R}$ be a Morse function.  For any Riemannian metric $g$ on $M$, we consider the flow along the negative gradient vector field $-\nabla f$; for every critical point $p$ of $f$ with Morse index $\ind(p)$, the flow determines the descending (or unstable) manifold $\mathcal{D}(p)$ of dimension $\ind(p)$ and the ascending (or stable) manifold $\mathcal{A}(p)$ of dimension $n - \ind(p)$.  The function $f$ satisfies \textbf{\emph{Morse-Smale transversality}} with respect to the metric $g$ if for every two critical points $p$ and $q$, the manifolds $\mathcal{D}(p)$ and $\mathcal{A}(q)$ have transverse intersection.  In particular, if the indices of $p$ and $q$ differ by $1$, then there is a discrete set of unparametrized flow lines between $p$ and $q$.  An \textbf{\emph{$n$-part broken trajectory}} is a maximum-length path through the resulting directed graph; that is, it is a sequence of critical points $p_n, p_{n-1}, \ldots, p_1, p_0$ where the index of point $p_i$ is $i$, and a sequence of unparametrized flow lines $\gamma_n, \gamma_{n-1}, \ldots, \gamma_1$ where each $\gamma_i$ runs from $p_i$ to $p_{i-1}$.

The main theorem of this paper is stated as follows.

\begin{theorem}\label{main-thm}
Let $M$ be a closed, oriented manifold of dimension $n \geq 2$, admitting a hyperbolic metric.  Let $g$ be an arbitrary Riemannian metric on $M$, and let $f : (M, g) \rightarrow \mathbb{R}$ be a Morse function satisfying Morse-Smale transversality.  Then we have
\[\#(n\text{-part broken trajectories of }{-\nabla f}) \geq \frac{\Vol(M, \hyp)}{\Vol \Delta^n},\]
where $\Vol \Delta^n$ denotes the supremal volume of a straight simplex in $n$-dimensional hyperbolic space.
\end{theorem}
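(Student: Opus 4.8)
The plan is to extract from the Morse data an explicit real singular fundamental cycle of $M$ that uses exactly one simplex per $n$-part broken trajectory, and then to pass from the simplicial volume $\|M\|$ to $\Vol(M,\hyp)$ via the theorem of Gromov and Thurston that $\Vol(M,\hyp)=\|M\|\cdot\Vol\Delta^n$ for a closed hyperbolic manifold. Given such a cycle, the definition of simplicial volume yields $\|M\|\le\#(n\text{-part broken trajectories})$ at once, and the theorem follows. (Note that the Morse data lives on $(M,g)$ while the volume comparison uses the hyperbolic metric; this is harmless, since $\|M\|$ depends only on the homotopy type of $M$.)

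To build the cycle I would use the known structure of the compactified, resolved descending manifolds. For each critical point $p$, the closure $\overline{\mathcal D(p)}$ is the image of a compact stratified space $\hat{\mathcal D}(p)$ of dimension $\ind(p)$ carrying an iterated-cone structure: $\hat{\mathcal D}(p)$ is the cone with apex $p$ over $\bigsqcup_{q,\gamma}\hat{\mathcal D}(q)$, where $q$ runs over the critical points of index $\ind(p)-1$ and $\gamma$ over the flow lines from $p$ to $q$, these pieces being glued along lower strata. Iterating down to index $0$ shows that the top-dimensional cells of $\hat{\mathcal D}(p_n)$, for $\ind(p_n)=n$, are in bijection with the $n$-part broken trajectories through $p_n$, each such cell being an $n$-simplex with ordered vertices $p_n,p_{n-1},\dots,p_0$ that maps into $M$ through the chosen flow lines and the cone structure. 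For a broken trajectory $T=(p_n,\dots,p_0;\gamma_n,\dots,\gamma_1)$ let $\sigma_T\colon\Delta^n\to M$ be this simplex and $\epsilon(T)=\pm1$ the product of the orientation signs of $\gamma_n,\dots,\gamma_1$ in the Morse--Smale complex $(C_*,\del)$. I would then verify that $z:=\sum_T\epsilon(T)\,\sigma_T$ is a cycle representing $\pm[M]$. Being a cycle reduces to $\del^2=0$: the face of $\sigma_T$ omitting its index-$j$ vertex depends only on $T$ with its index-$j$ critical point and adjacent flow lines deleted, and for $0<j<n$ the signed number of ways to reinsert an index-$j$ critical point with its two connecting flow lines is a matrix entry of $\del^2$, while for $j=n$ and $j=0$ it is a matrix entry of $\del$ applied to the Morse cycle $\sum_{\ind=n}(\cdot)$, respectively of the augmentation $C_0\to\mathbb R$. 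That $z$ represents $\pm[M]$ holds because $\bigsqcup_{p_n}\hat{\mathcal D}(p_n)$ with this subdivision is a degree-one resolution of $M$: a generic oriented point of the open stratum $\mathcal D(p_n)\subset M$ has a unique preimage --- this is where orientability of $M$ is used --- and the boundary contributions cancel across the index-$n$ critical points precisely because $\sum_{p_n}p_n$ is a cycle in $C_*$.

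Gromov's lemmas on simplicial volumes of stratified spaces enter, and the main obstacle lies, in the passage from ``$M$ is assembled through its descending manifolds out of pieces indexed by flow lines'' to ``$M$ has a fundamental cycle with one simplex per $n$-part broken trajectory''. This requires (i) the Morse-theoretic fact, which I would take from the literature on compactified moduli spaces and the flow category, that the compactified descending manifolds genuinely carry the iterated-cone stratified structure above with the broken trajectories as the maximal flags of strata, and (ii) a general principle --- provided by Gromov's stratified framework --- that a space with such a stratification has a fundamental cycle supported on the stratification with exactly one top simplex per maximal flag, together with the global coherence of all the face identifications and orientation signs. Matching the Morse stratification to the hypotheses of Gromov's lemmas and carrying out the orientation bookkeeping is the real work; the volume comparison and the counting inequality are then formal.
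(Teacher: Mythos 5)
Your high-level plan (Morse data $\to$ a cycle reflecting the $n$-part broken trajectories $\to$ simplicial volume $\to$ Gromov proportionality) matches the paper's, and the last two steps are carried out the same way. But the middle step contains a genuine gap that is exactly the hard part the paper spends Sections 2--3 on, and I don't think your shortcut survives scrutiny.

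You propose to build an honest singular fundamental cycle $z=\sum_T\epsilon(T)\,\sigma_T$ with \emph{exactly one} $n$-simplex per $n$-part broken trajectory, so that $\Vert[M]\Vert_\Delta\le\#T$ follows just from the definition of the norm. Two things go wrong. First, the iterated-cone picture of $\overline{\mathcal D(p)}$ is not correct. By Qin's structure theorems (Theorems~\ref{3.3}--\ref{ball} in the paper), $\overline{\mathcal D(p)}$ is a compact manifold with faces whose boundary is the union of $\overline{\mathcal M(p,r)}\times\overline{\mathcal D(r)}$ over \emph{all} critical points $r$ with $\ind(r)<\ind(p)$, not only $\ind(r)=\ind(p)-1$; the codimension-one faces at index drops $>1$ are not cones on anything indexed by flow lines out of $p$, and the broken-trajectory simplices do not cover $\overline{\mathcal D(p)}$. (In the paper's Figure~\ref{morse-disk}, the entire left boundary arc $\overline{\mathcal M(p,c)}\times\mathcal D(c)$ of the $2$-disk is such a face; no ``cone on a cone on a point'' produces it, and the two essential simplices $\sigma_2,\sigma_4$ in Figure~\ref{triangulated-disk} are far from being a relative cycle by themselves.) Second, the $\del^2=0$ bookkeeping you sketch does not close up: deleting the interior vertex $p_j$ of $\sigma_T$ creates a new edge from $p_{j+1}$ to $p_{j-1}$, and any construction of $\sigma_T$ adapted to the stratification will send this edge to (something homotopic along) the concatenation $\gamma_{j+1}*\gamma_j$, which \emph{does} depend on $p_j,\gamma_j,\gamma_{j+1}$. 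So the faces you want to pair off in the $\del^2$ argument are different singular $(n-1)$-simplices and do not cancel. Your step (ii) --- that Gromov's stratified framework furnishes a fundamental cycle ``with exactly one top simplex per maximal flag'' --- is not what those lemmas give; this is precisely the point the Amenable Reduction Lemma is needed for.

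What the paper actually does is weaker than constructing such a cycle and that is why it works: it takes a genuine stratified triangulation of each $\overline{\mathcal D(p)}$ (which has \emph{many} top simplices), identifies the \emph{essential} ones with the $n$-part broken trajectories via the corner-counting Lemmas~\ref{corner}--\ref{gray}, and then invokes the Amenable Reduction Lemma (through Lemma~\ref{gen-loc} and Lemma~\ref{simplicial-goal}) to show that the \emph{non}-essential simplices contribute nothing to $\Vert\alpha_*[M]\Vert_\Delta$. The amenability hypothesis is discharged because the open cells of the CW structure are contractible. The non-essential simplices never disappear from the cycle; their total coefficient is made negligible only after passing to the simplicial norm. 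If you want to make your argument correct, you would need to replace ``construct a fundamental cycle with one simplex per broken trajectory'' by ``construct a stratified relative cycle with one \emph{essential} simplex per broken trajectory, then apply Amenable Reduction,'' which is exactly the paper's route.
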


In particular, if $M$ is a closed, oriented manifold such that there exists a Morse-Smale gradient vector field with no $n$-part broken trajectories (for instance, if there is not a critical point of every index), then $M$ cannot admit a hyperbolic metric.

The hyperbolic volume plays the role of a topological invariant of $M$; the larger the hyperbolic volume, the more topologically complex $M$ is.  In even dimension, the Chern-Gauss-Bonnet theorem implies that the hyperbolic volume is proportional to the Euler characteristic and thus to the sum of the Betti numbers.  In odd dimension, the Euler characteristic is zero, but the Mostow rigidity theorem (which applies to every dimension at least 3, odd or even) states that $M$ has at most one hyperbolic metric and so the hyperbolic volume is well-defined.  However, the hyperbolic volume in odd dimension is not proportional to the sum of the Betti numbers; this fact is implied by the following proposition, which we state for contrast with the main theorem.  A similar example appears on page~30 of Gromov's paper~\cite{Gromov09}.  The proof is not related to the rest of this paper, but we include it for completeness.

\begin{proposition}
There is a sequence $M_1, M_2, \ldots$ of closed, oriented hyperbolic manifolds of dimension $3$, and a sequence of Morse functions $f_i : M_i \rightarrow \mathbb{R}$, such that the number of critical points of the Morse functions is uniformly bounded, but the hyperbolic volumes satisfy
\[\lim_{i \rightarrow \infty} \Vol(M_i, \hyp) = \infty.\]
\end{proposition}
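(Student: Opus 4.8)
The plan is to realize each $M_i$ as the mapping torus of a power of one fixed pseudo-Anosov surface homeomorphism, so that the fiber genus --- and hence the number of critical points we need --- stays constant while the hyperbolic volume grows linearly in $i$. This is essentially Gromov's example.

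The first step is a Morse-theoretic observation with no geometry in it: for any homeomorphism $\phi$ of the closed oriented genus-$g$ surface $\Sigma_g$, the mapping torus $M_\phi$ admits a Morse function with at most $4g+4$ critical points, a bound depending only on $g$. I would argue this through Heegaard splittings: a genus-$h$ Heegaard splitting of a closed $3$-manifold is realized by a Morse function with exactly $2h+2$ critical points (one each of index $0$ and $3$, and $h$ each of index $1$ and $2$), so it suffices to produce a genus-$(2g+1)$ Heegaard splitting of $M_\phi$. For this I would cut $M_\phi$ along two of its fibers into two copies $A$ and $B$ of $\Sigma_g \times I$, remove from each a regular neighborhood of a vertical arc $\{\ast\}\times I$, and use the fact that $\Sigma_g$ minus an open disk, crossed with $I$, is a genus-$2g$ handlebody (it is a $3$-ball with $2g$ one-handles attached). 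Attaching to $A$-minus-its-tube the tube removed from $B$, and vice versa, then exhibits $M_\phi$ as a union of two genus-$(2g+1)$ handlebodies meeting along their common boundary.

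With that in hand the geometric part is quick. I would fix once and for all an orientation-preserving pseudo-Anosov homeomorphism $\phi$ of some closed oriented surface $\Sigma_g$ with $g \geq 2$ --- such maps exist by Thurston's or Penner's construction --- and set $M_i := M_{\phi^i}$. Each $\phi^i$ is again an orientation-preserving pseudo-Anosov, so Thurston's hyperbolization theorem for surface bundles makes every $M_i$ a closed oriented hyperbolic $3$-manifold; equivalently, $M_i$ is the degree-$i$ cyclic cover of $M_1$ dual to the fibration, whence $\Vol(M_i,\hyp) = i \cdot \Vol(M_1,\hyp) \to \infty$. On the other hand $M_i$ is the mapping torus of a homeomorphism of the single fixed surface $\Sigma_g$, so the first step supplies a Morse function $f_i$ on $M_i$ with at most $4g+4$ critical points, a bound independent of $i$. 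Taking these $M_i$ and $f_i$ proves the proposition.

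The only genuinely non-formal ingredient is the first step, and the place where care is needed is the bookkeeping there: checking that the two pieces built from the two fibers and two vertical arcs really are handlebodies and that their union is all of $M_\phi$ with intersection exactly the common boundary. Since the resulting bound ``Heegaard genus of $M_\phi$ is at most $2g+1$'' for surface bundles over $S^1$ is classical, one could alternatively just cite it; the remaining inputs --- existence of pseudo-Anosov maps, hyperbolization of fibered $3$-manifolds, and multiplicativity of hyperbolic volume under finite coverings --- are all standard.
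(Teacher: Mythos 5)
Your proof is correct, and the geometric heart of the argument---take the mapping torus $M_i$ of the $i$th power of a fixed pseudo-Anosov $\phi$, hyperbolize via Thurston, and use that $M_i$ is the degree-$i$ cyclic cover of $M_1$ so that volume grows linearly---is exactly what the paper does. Where you diverge is in how you produce the Morse function with a uniform bound on critical points. The paper's route is more elementary: it cuts $M_i$ along two fibers into two copies of $\Sigma\times[0,1]$, chooses a single Morse function $f$ on $\Sigma\times[0,1]$ that is a standard projection near the boundary (so the boundary is a level set at height $0$), and glues $f$ on one piece to $-f$ on the other; the critical-point count is then twice that of $f$, independent of $i$. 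Your route also cuts along two fibers, but then performs the classical tube-swap to turn each $\Sigma_g\times I$ into a genus-$(2g+1)$ handlebody, exhibits a Heegaard splitting, and invokes the standard correspondence between a genus-$h$ splitting and a self-indexing Morse function with $2h+2$ critical points, for a bound of $4g+4$. Both work; yours buys an explicit, reusable quantitative statement (Heegaard genus of a genus-$g$ surface bundle is at most $2g+1$) at the cost of a more elaborate construction, whereas the paper's gluing-$f$-and-$-f$ device is shorter precisely because it does not need the pieces to be handlebodies---any Morse function on $\Sigma\times I$ standard near the boundary suffices. The tube-swap bookkeeping you flag as the delicate point is indeed the place to be careful, but it is a well-known argument and you have described it correctly; it is fine either to carry it out or to cite the classical Heegaard-genus bound for fibered $3$-manifolds as you suggest.
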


\begin{proof}
Let $\Sigma$ be an oriented surface of genus at least $2$, and let $\varphi : \Sigma \rightarrow \Sigma$ be a pseudo-Anosov diffeomorphism, so that the mapping torus $M_1$ is a hyperbolic manifold~\cite{Thurston98}.  Let $M_i$ be the mapping torus of the $i$th iterate $\varphi^i$ of $\varphi$.  Then $M_i$ is an $i$-fold cover of $M_1$, and therefore has hyperbolic volume given by
\[\Vol(M_i, \hyp) = i \cdot \Vol(M_1, \hyp).\]

To create a Morse function on $M_i$ with few critical points, we view $M_i$ as two copies of $\Sigma \times [0, 1]$ glued along the boundary.  We find a Morse function $f$ on $\Sigma \times [0, 1]$ that has the boundary as a level set and is a standard projection near the boundary; that is, for some $\varepsilon > 0$, for all $(x, t) \in \Sigma \times [0, \varepsilon)$ we have $f(x, t) = t$ and for all $(x, t) \in \Sigma \times (1-\varepsilon, 1]$ we have $f(x, t) = 1-t$.  To construct $f_i$ we take $f$ on one copy of $\Sigma \times [0, 1]$, and $-f$ on the other copy, and glue them together.
\end{proof}

Because the Morse inequalities state that the number of Morse critical points is always at least the sum of the Betti numbers, in this example the Betti numbers are bounded while the hyperbolic volume is unbounded.  The main theorem says that even though the number of critical points is fixed, there must be an increasing number of flow lines between them in order to have an unbounded number of $3$-part broken trajectories.

The main theorem fits into a family of theorems initiated by Gromov in the paper~\cite{Gromov09}.  There, he shows that the hyperbolic volume (really the Gromov simplicial norm; the definition appears in Section~\ref{simplicial} of this paper) is a lower bound for a different count describing the complexity of a map: he considers a smooth map of manifolds $X^n \rightarrow Y^{n-1}$ that drops in dimension by $1$, and counts the number of points at which the set of singular values---an immersed space in $Y$ of codimension $1$---intersects itself with the maximum possible multiplicity.  The paper~\cite{Alpert15} uses the same methods to show that the simplicial norm is a lower bound for again another count: we consider a nonvanishing gradient vector field on a manifold with boundary, and count the flow lines that have a maximum number of tangencies to the boundary.  In this paper we bring those methods to the setting of Morse-Smale gradient vector fields.

In the proof of the main theorem, we view the manifold $M$ as a CW-complex in which the cells are the descending manifolds of the critical points.  In Section~\ref{simplicial} we use tools from Gromov's paper~\cite{Gromov09} to bound the hyperbolic volume of $M$ in terms of the CW structure.  Then in Section~\ref{morse}, we invoke machinery from Morse theory in order to justify the needed properties of the CW structure, including its relationship to the $n$-part broken trajectories.  We conclude in Section~\ref{end} by finishing the proof of the main theorem.

\emph{Acknowledgments.}  The main theorem was conjectured by Gabriel Katz related to our collaboration on the paper~\cite{Alpert15}.  I would like to thank him and my advisor Larry Guth for conversations about that project.  I would also like to thank Josh Greene, Jon Bloom, and Nati Blaier for pointing me toward the appropriate references.

\section{Simplicial volume of stratified spaces}\label{simplicial}

The mathematics in this section is all based on Gromov's simplicial norm, which was introduced in~\cite{Gromov82} and is a function on singular homology classes with real coefficients.  The simplicial norm is defined as follows.  For every singular chain $c$ on a topological space $X$, the norm of $c$, denoted $\Vert c \Vert_\Delta$, is the sum of absolute values of the (real) coefficients.  For every real homology class $h$, the \textbf{\emph{simplicial norm}} of $h$, denoted $\Vert h \Vert_{\Delta}$, is the infimum of $\Vert c \Vert_\Delta$ over all cycles $c$ representing $h$.  The simplicial norm is often called the \textbf{\emph{simplicial volume}} because it generalizes hyperbolic volume: if $M$ is any closed, oriented hyperbolic manifold of dimension $n$, with fundamental class $[M]$, then the simplicial norm is related to the hyperbolic volume by the formula
\[\Vert [M] \Vert_{\Delta} = \frac{\Vol(M, \hyp)}{\Vol \Delta^n},\]
where $\Vol \Delta^n$ denotes the supremal volume of a straight simplex in $n$-dimensional hyperbolic space (Proportionality Theorem, p.~11 of~\cite{Gromov82}).

Because of this proportionality, our main theorem has an equivalent formulation in terms of the simplicial volume instead of the normalized hyperbolic volume.  The revised statement, which appears as Theorem~\ref{general}, applies not only to manifolds admitting a hyperbolic metric but to any closed manifolds with nonzero simplicial volume.  Among these are manifolds with sectional curvature pinched between two negative constants; products of hyperbolic manifolds, which never admit a hyperbolic metric; and, generalizing the products, locally symmetric spaces of non-compact type, as proved by Lafont and Schmidt in~\cite{Lafont06}.

In the paper~\cite{Gromov09}, Gromov introduces the Amenable Reduction Lemma and the Localization Lemma in order to bound simplicial norms in stratified spaces; see~\cite{Alpert15} for a more detailed exposition of those lemmas.  The purpose of this section is to prove a version of the Localization Lemma that applies to CW-complexes, which have an obvious partition by the open cells of the CW structure; this partition does not satisfy the definition of stratification but behaves like a stratification for this purpose.

In order to state this variant of the Localization Lemma, we need to define generalized stratifications and a stratified version of the simplicial norm.  In Gromov's paper~\cite{Gromov09}, a \textbf{\emph{stratification}} of a space is defined to be any partition with the following property: if a stratum $S$ intersects the closure $\overline{S'}$ of another stratum $S'$, then $S \subseteq \overline{S'}$ and we write $S \preceq S'$.  In this paper, we want the theorems about stratifications to apply to CW-complexes, partitioned by the open cells; however, not all CW-complexes are stratifications by this definition.  Therefore, we define a \textbf{\emph{generalized stratification}} of a space to be any partition with the following property.  If a stratum $S$ intersects the closure $\overline{S'}$ of another stratum $S'$, then we write $S' \rightarrow S$, pronounced as ``$S'$ limits to $S$''.  In order for the partition to be a generalized stratification, we require that the resulting directed graph on the set of strata have no directed cycles.  In this case, the directed graph generates a partial order on strata: we write $S \preceq S'$ whenever there is a directed path from $S'$ to $S$.  If neither $S \preceq S'$ nor $S' \preceq S$, then we say the two strata are \textbf{\emph{incomparable}}.  In this paper we usually assume that strata are connected; given an arbitrary generalized stratification, the set of connected components of strata is also a generalized stratification.

The stratified simplicial norm, like the simplicial norm, is an infimum of sums of coefficients, but the infimum is taken over only those cycles $c$ that are consistent with the (generalized) stratification, in the following sense:
\begin{itemize}
\item The \emph{cellular} condition requires that for each simplex of $c$, the image of the interior of each face (of any dimension) must be contained in one stratum.
\item The \emph{order} condition requires that the image of each simplex of $c$ must be contained in a totally ordered chain of strata; that is, the simplex does not intersect any two incomparable strata.  
\item The \emph{internality} condition requires that for each simplex of $c$, if the boundary of a face (of any dimension) maps into a stratum $S$, then the whole face maps into $S$.  
\end{itemize}
For technical reasons, we also require a fourth condition, which is stated in terms a space $\Sigma$ that is constructed from $c$ as follows.  Let $j$ be the dimension of $c$, and let $\Delta^j$ denote the abstract $j$-simplex.  For each singular simplex $\sigma_i : \Delta^j \rightarrow X$ appearing in $c$, there are $j+1$ face maps from $\Delta^{j-1}$ to $X$ obtained by restricting $\sigma_i$.  We form $\Sigma$ by taking one copy of $\Delta^j$ for each $\sigma_i$ and identifying the faces that have the same face map.  (A similar construction appears on pages 108--109 of Hatcher's textbook~\cite{Hatcher02}.)  Note that every face must be glued to at least one other face, because otherwise it would appear with nonzero coefficient in the linear combination $\del c$ of face maps, contradicting the cycle hypothesis $\del c = 0$.  Then we can view $c$ as a triple $(\Sigma, c_\Sigma, \sigma)$, where $c_\Sigma$ is a simplicial cycle on $\Sigma$, and $\sigma : \Sigma \rightarrow X$ is a continuous map such that $c = \sigma_*c_\Sigma$.  Note that the space $\Sigma$ is not necessarily an honest simplicial complex, linearly embeddable in Euclidean space, but is what Hatcher calls a $\Delta$-complex, which may have (for instance) edges in its $1$-skeleton that are self-loops (i.e., both endpoints are the same vertex).   The fourth condition is the following.
\begin{itemize}
\item The \emph{loop} condition requires that for every edge in the $1$-skeleton of $\Sigma$ that is a self-loop, its image in $X$ must be a single point.
\end{itemize}

In the papers \cite{Gromov09} and \cite{Alpert15}, the stratified simplicial norm of a homology class is the infimal sum of coefficients of cycles that represent the homology class and satisfy the conditions above.  In this paper, we only count the simplices for which every vertex is in a different stratum; these are called essential simplices, a term which is defined in a slightly more general context later in the paper.  For any cycle $c = \sum r_i \sigma_i$ (where $\sigma_i$ are singular simplices and $r_i$ are real coefficients) and generalized stratification $\mathcal{S}$, let $\Vert c \Vert_{\Delta, \ess}^{\mathcal{S}}$ be defined by
\[\Vert c \Vert_{\Delta, \ess}^{\mathcal{S}} = \sum_{\text{essential }\sigma_i} \abs{r_i}.\]

The \textbf{\emph{essential stratified simplicial norm}} of a homology class $h$ with respect to a generalized stratification $\mathcal{S}$, denoted $\Vert h \Vert_{\Delta, \ess}^{\mathcal{S}}$, is the infimum of $\Vert c \Vert_{\Delta, \ess}^{\mathcal{S}}$ taken over all cycles $c$ representing $h$ that satisfy the \emph{cellular}, \emph{order}, \emph{internality}, and \emph{loop} conditions.  When $h$ is a relative homology class, the essential stratified simplicial norm is defined in the same way, as an infimum over relative cycles.

The goal of this section is the following lemma.

\begin{lemma}\label{simplicial-goal}
Let $X$ be a finite CW-complex.  For any homology class $h \in H_j(X)$, let $h_{\rel}$ denote the corresponding relative homology class in $H_j(X, X^{j-1})$.  Let $\mathcal{S}$ denote the generalized stratification of $X$ consisting of the open cells of the CW structure.  Let $Z$ be any topological space such that the universal cover is contractible, and let $\alpha : X \rightarrow Z$ be a continuous map.  Then the simplicial norm of the class $\alpha_*h \in H_j(Z)$ satisfies the bound
\[\Vert \alpha_* h \Vert_{\Delta} \leq \Vert h_{\rel} \Vert_{\Delta, \ess}^{\mathcal{S}}.\]
\end{lemma}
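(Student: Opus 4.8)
The plan is to adapt the proof of Gromov's Localization Lemma from~\cite{Gromov09} (see~\cite{Alpert15} for a detailed account) to the open-cell generalized stratification, the genuinely new ingredients being the passage from relative cycles to absolute cycles and the observation that the cells of a CW-complex are contractible. Concretely, I would fix $\varepsilon > 0$ and choose a relative cycle $c = \sum_i r_i \sigma_i$ representing $h_{\rel}$ and satisfying the \emph{cellular}, \emph{order}, \emph{internality}, and \emph{loop} conditions, with $\sum_{\text{essential }i} \abs{r_i} < \Vert h_{\rel} \Vert_{\Delta, \ess}^{\mathcal{S}} + \varepsilon$. Writing $c = c_{\ess} + c_{\text{non}}$ for the splitting into essential simplices and the rest, we have $\Vert c_{\ess} \Vert_\Delta \leq \Vert c \Vert_{\Delta, \ess}^{\mathcal{S}}$. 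The goal is then to produce an \emph{absolute} cycle $w$ in $Z$ with $[w] = \alpha_* h$ and $\Vert w \Vert_\Delta \leq \Vert c_{\ess} \Vert_\Delta + O(\varepsilon)$; letting $\varepsilon \to 0$ finishes the proof. (The case $j = 0$ is trivial, since then every $0$-simplex is essential, $\Vert \cdot \Vert_{\Delta, \ess}^{\mathcal{S}} = \Vert \cdot \Vert_\Delta$, and $\alpha_*$ never increases the simplicial norm.)

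First I would pass from relative to absolute cycles. Because $X^{j-1}$ is a CW-complex of dimension $j-1$, we have $H_j(X^{j-1}) = 0$, so the map $H_j(X) \to H_j(X, X^{j-1})$ is injective; since $h_{\rel}$ is by definition in its image, exactness of the long exact sequence of the pair forces $[\del c] = 0$ in $H_{j-1}(X^{j-1})$. Hence there is a $j$-chain $d$ supported in $X^{j-1}$ with $\del d = \del c$, and $c - d$ is an absolute cycle; by the injectivity just noted, $c - d$ represents $h$, so $\alpha_*(c-d) = \alpha_* c_{\ess} + \alpha_* c_{\text{non}} - \alpha_* d$ represents $\alpha_* h$ in $Z$. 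It remains to discard the contributions of $c_{\text{non}}$ and $d$ at negligible cost to the norm.

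This is where the asphericity of $Z$ and the contractibility of cells come in. Using that the universal cover of $Z$ is contractible, I would straighten the cycle in $Z$ — lift simplices to the universal cover and cone them off canonically — obtaining a homologous cycle with no increase in simplicial norm, in which each straight simplex depends only on its vertices and is degenerate whenever two vertices agree in the universal cover; by the \emph{loop} condition this already annihilates every non-essential simplex whose repeated pair of vertices is identified in the $\Delta$-complex $\Sigma$ associated to $c$. For the remaining non-essential simplices, and for the chain $d$, I would invoke the Amenable Reduction Lemma: grouped by their top stratum, the non-essential simplices lie in individual closed cells of $X$, and every closed cell is contractible and so has amenable (indeed trivial) fundamental group, while $d$ lies in the skeleton $X^{j-1}$. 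Proceeding through the partial order on strata from the top down so that the corrections made in different cells are mutually consistent, the Amenable Reduction Lemma lets me trade $\alpha_* c_{\text{non}}$ and $\alpha_* d$, up to a boundary, for chains of total norm less than $\varepsilon$; and the homology class does not change, because inside a contractible subspace of $Z$ any two $j$-chains with the same boundary are homologous in $Z$. This yields the desired cycle $w$ with $\Vert w \Vert_\Delta \leq \Vert c_{\ess} \Vert_\Delta + \varepsilon \leq \Vert h_{\rel} \Vert_{\Delta, \ess}^{\mathcal{S}} + 2\varepsilon$. I expect the main obstacle to be exactly this last step: organizing the cell-by-cell application of the Amenable Reduction Lemma so that the corrections are compatible and the homology class is preserved, and in particular handling the filling chain $d$ in the lower skeleton — which may be cleanest to carry out by an induction on $j$, feeding the statement of the lemma in dimension $j-1$ back into the treatment of $\del c$ inside $X^{j-1}$.
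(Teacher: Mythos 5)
Your proposal has the right tools (the Amenable Reduction Lemma, contractibility of cells, and passing from a relative to an absolute cycle; your observation that $H_j(X^{j-1})=0$ gives a filling chain $d$ in $X^{j-1}$ with $[c-d]=h$ is a correct special case of the paper's long-exact-sequence sublemma), but there is a genuine gap at the center. The Amenable Reduction Lemma is a one-shot bound: given one cycle with one partial coloring of its vertices, it bounds $\Vert \alpha_*[c]\Vert_\Delta$ by the total weight of essential simplices. It is not an operation that ``trades'' parts of a chain for lighter chains, and applying it ``cell by cell, proceeding through the partial order on strata from the top down'' is not a well-defined procedure. What you actually need is a single global partial coloring of a cycle extending $c$ under which every simplex of the filling is non-essential, and producing that coloring is the hard part. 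If you color the vertices of $d$ naively by the stratum of $X^{j-1}$ containing them, a single simplex of $d$ can have $j+1$ vertices in $j+1$ pairwise incomparable cells of the $(j-1)$-skeleton, hence be essential; $d$ need not satisfy the \emph{cellular}, \emph{order}, or \emph{internality} conditions, so the bound of $j$ on the length of totally ordered chains in $X^{j-1}$ does not help by itself.

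The paper's proof (via the Generalized Localization Lemma) supplies exactly the missing construction: a metric perturbation (Sublemma~\ref{partition}) producing a thickened partition $\{P_S\}$ of $X^{j-1}$ in which incomparable pieces are $\delta$-separated; iterated barycentric subdivision of $d$ until every simplex has diameter less than $\delta$, so that within each simplex all vertex labels lie on a totally ordered chain (of length at most $j$, forcing a repeated label); prism chains $c_1,c_2$ gluing the subdivided filling back to $c$ while keeping the coloring coherent; and the NDR property of CW-complexes ensuring each $\Sigma^1_\ell$ pushes into a single contractible cell so that $\alpha_*\pi_1(\Sigma^1_\ell)$ is trivial. None of this appears in your sketch. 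Two smaller points: the straightening step (lifting to the universal cover of $Z$ and coning) is not used in the paper, is not canonical in a general aspherical $Z$, and in any case only degenerates simplices with a repeated vertex image, not the problematic simplices of $d$; and the claim that ``inside a contractible subspace of $Z$ any two $j$-chains with the same boundary are homologous'' assumes $\alpha$ carries cells to contractible subspaces of $Z$, which need not hold for an arbitrary continuous $\alpha$ --- the point of the Amenable Reduction Lemma is precisely to exploit $\alpha_*\pi_1(\mathrm{cell})=0$ without any such geometric control on the image.
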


To prove this Lemma~\ref{simplicial-goal}, we prove the Generalized Localization Lemma (Lemma~\ref{gen-loc}), for which Lemma~\ref{simplicial-goal} and the Localization Lemma of~\cite{Gromov09} are special cases.  The proof is very similar to the proof of the Localization Lemma (given with more detail in~\cite{Alpert15}), and it relies on the Amenable Reduction Lemma of~\cite{Gromov09}.  There is one main way in which the proof of the Generalized Localization Lemma goes beyond what is needed for Lemma~\ref{simplicial-goal}.  A hypothesis involving amenable groups appears in the Amenable Reduction Lemma and the Generalized Localization Lemma because it is a hypothesis in the Localization Lemma; however, in Lemma~\ref{simplicial-goal} these amenable groups are zero because they are the fundamental groups of the cells of the CW-complex.  Thus, for the proofs of Lemma~\ref{simplicial-goal} and the main theorem of this paper, it suffices to substitute ``zero'' wherever ``amenable group'' appears throughout this section.

In order to state the Amenable Reduction Lemma, we need to define the notion of partial coloring.  Let $c$ be a cycle, viewed as the triple $(\Sigma, c_{\Sigma}, \sigma)$.  By a \textbf{\emph{partial coloring}} of $c$ we mean a list $V_1, V_2, \ldots, V_\ell, \ldots$ of disjoint subsets of the set of vertices of $\Sigma$.   According to the partial coloring we classify each simplex as either essential or non-essential; the non-essential simplices are the ones that can be made to disappear in a certain sense.  A simplex $\Delta$ of $\Sigma$ is a non-essential simplex of $c$ if either of the following conditions holds:
\begin{itemize}
\item $\Delta$ has two distinct vertices in the same $V_\ell$ (the vertices are permitted to have the same image in $X$ as long as they are distinct in $\Sigma$); or
\item $\Delta$ has two vertices that are the same point of $\Sigma$, and the edge between them is a null-homotopic loop in $X$.
\end{itemize}
An \textbf{\emph{essential simplex}} of $c$ is any simplex $\Delta$ of $\Sigma$ that is not non-essential; that is, $\Delta$ is essential if in the $1$-skeleton of $\Delta$ in $\Sigma$, every vertex is in a different $V_\ell$, and any edges that are self-loops map to non-contractible loops in $X$.  In particular, any simplex $\Delta$ with vertices in $\dim(\Delta) + 1$ different sets $V_\ell$ is essential.  (In the definition of essential stratified simplicial norm, the essential simplices were defined according to the partial coloring by strata; there, the \emph{loop} condition replaces the second condition for essential simplex.)  

Let $Z$ be a space with contractible universal cover and let $\alpha : X \rightarrow Z$ be a continuous map that sends all vertices of $c$ in $X$ to the same point of $Z$.  Let $\Gamma_\ell$ denote the subgroup of $\pi_1(Z)$ generated by the $\alpha$-images of the edges of $c$ for which both endpoints are in $V_\ell$.

\begin{lemma}[Amenable Reduction Lemma, p.~25 of~\cite{Gromov09}]\label{amen-red}
Let $c$ be a cycle on $X$ with a partial coloring $\{V_\ell\}$, let $Z$ and $\alpha : X \rightarrow Z$ be as above, and suppose that $\Gamma_\ell$ is an amenable group for every $\ell$.  Then the simplicial norm of the $\alpha$-image of the homology class $[c] \in H_*(X)$ represented by the cycle $c = \sum r_i \sigma_i$ (where $r_i \in \mathbb{R}$ are coefficients and $\sigma_i$ are simplices) satisfies the bound
\[\Vert \alpha_*[c]\Vert_{\Delta} \leq \sum_{\text{essential }\sigma_i} \abs{r_i}.\]
\end{lemma}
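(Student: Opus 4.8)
Since this inequality is Gromov's \cite{Gromov09}, the plan is not to reprove it from scratch but to recall the mechanism (a fuller account is in \cite{Alpert15}), as that is the part that matters for how the lemma is used below. The content of the bound lies entirely in the non-essential simplices: these are the ones with a pair of distinct vertices in a common color class $V_\ell$, or with a self-loop that $\alpha$ sends to a null-homotopic loop, and the assertion is that $\alpha_*[c]$ is represented by cycles in $Z$ in which such simplices cost nothing, while each essential $\sigma_i$ still costs no more than $\abs{r_i}$. So the whole task is to ``absorb'' the non-essential simplices at no price, leaving a homologous cycle supported, up to negligibly small error, on the essential ones.

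First I would pass to the universal cover $p : \widetilde{Z} \to Z$, which is contractible, so that inside it singular chains can be straightened and homotopies are unobstructed. Since $\alpha$ sends every vertex of $c$ to one basepoint $z_0$, every edge of $c$ maps to a loop at $z_0$, and those with both endpoints in $V_\ell$ generate the amenable group $\Gamma_\ell$. A self-loop sent to a null-homotopic loop lifts to a closed path in $\widetilde{Z}$ whose straightening is constant, so any simplex containing such an edge straightens to a degenerate simplex; that case uses only contractibility. For a simplex with a pair of vertices in a common $V_\ell$ one uses amenability instead. Process the color classes one at a time; for a fixed class $V = V_\ell$, with $\Gamma = \Gamma_\ell$ finitely generated (as $c$ is a finite cycle) and hence carrying a F{\o}lner exhaustion $F_1 \subseteq F_2 \subseteq \cdots$, replace each simplex by the average over $F_n$ of the simplices obtained by sliding its $V$-vertices through $\Gamma$, and let $n \to \infty$. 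The F{\o}lner condition $\abs{\partial F_n}/\abs{F_n} \to 0$ forces the spill-over terms created by these slides to vanish in the limit, so the averaged chain is again a cycle and is homologous to $\alpha_* c$ in $Z$; after averaging and straightening in $\widetilde{Z}$, a non-essential simplex of this color gets a pair of its vertices pushed to a common point and hence becomes degenerate, whereas an essential simplex, having at most one vertex in each $V_\ell$, is merely re-straightened and, the average being a probability average, keeps coefficient at most $\abs{r_i}$. Summing over the finitely many essential simplices gives the stated bound.

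The step I expect to be the main obstacle is the limiting and exactness bookkeeping in that last construction: one must check that telescoping single-vertex slides over a F{\o}lner set and then passing to the limit produces an honest real chain that is genuinely a cycle and genuinely homologous to $\alpha_* c$, which is exactly where amenability does its work — equivalently, this can be packaged as the vanishing of the bounded cohomology $H^*_b(\Gamma_\ell;\mathbb{R})$. A secondary difficulty is keeping track of the $\Delta$-complex $\Sigma$, with its repeated vertices and self-loops, and of the induction over the several color classes, so that the notion of essential simplex is preserved at each stage. For the application to Lemma~\ref{simplicial-goal} and to the main theorem, however, all the $\Gamma_\ell$ are trivial — they are fundamental groups of cells of a CW-complex — so the averaging degenerates to an ordinary homotopy collapsing each $V_\ell$ to a point, and these difficulties do not arise.
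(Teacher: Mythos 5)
The paper does not prove this lemma; it is quoted directly from Gromov~\cite{Gromov09} (with~\cite{Alpert15} cited for a fuller exposition), and the text immediately moves on to deriving Corollary~\ref{amen-red-cor} from it. So there is no in-paper proof to compare against, and your decision to recall the mechanism rather than reprove it mirrors exactly what the paper does. Your recalled mechanism has the right ingredients: lift and straighten in the contractible cover $\widetilde Z$; collapse null-homotopic self-loops by contractibility alone; handle repeated colors by averaging over F{\o}lner sets of $\Gamma_\ell$ (finitely generated since $c$ is a finite chain), packaged equivalently as vanishing of bounded cohomology of amenable groups; and the closing observation that in the paper's application each $\Gamma_\ell$ is trivial, so all of this degenerates to an ordinary contraction, matches what the author says earlier in the section.

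One detail of the heuristic is not literally accurate and is worth flagging because it is exactly where the argument is delicate. After F{\o}lner averaging, a non-essential simplex with two distinct vertices in a common $V_\ell$ does \emph{not} straighten to a degenerate (constant-edge) simplex. If $\tilde v_1 = g\tilde v_0$ with $g \in \Gamma_\ell$ and one slides the two vertices independently by $h, h' \in F_n$, the image edge collapses only when $h = h'g$, which is an $O(1/\abs{F_n})$ fraction of slides; the generic term of the average remains a nondegenerate straight simplex. What amenability actually delivers, as in Gromov's diffusion argument and its exposition in~\cite{Alpert15}, is that the non-essential part's contribution to the $\ell^1$-norm of the averaged cycle (plus a boundary correction whose norm is controlled by $\abs{\del F_n}/\abs{F_n}$) can be driven below any $\epsilon$, not that the offending simplices vanish pointwise. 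You correctly identify the ``limiting and exactness bookkeeping'' as the obstacle; just beware that the pointwise-degeneracy picture, if taken literally, would send you down a blind alley. For the paper's own use of the lemma none of this arises, since there $\Gamma_\ell = 0$ and the reduction really is an honest homotopy.
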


It is easier to use a more general version of this lemma, obtained by removing the assumption that $\alpha : X \rightarrow Z$ sends every vertex of $c$ to the same point.  Without this assumption, $\Gamma_\ell$ is not a well-defined subgroup of $\pi_1(Z)$.  Instead, for each $\ell$ let $\Sigma_\ell^1$ denote the 1-dimensional subcomplex of $\Sigma$ consisting of the edges for which both endpoints are in $V_\ell$.  Instead of assuming that $\Gamma_\ell$ is an amenable group, we assume that for each connected component of each $\Sigma_\ell^1$, the image of its fundamental group under the composition of the maps $\sigma_*: \pi_1(\Sigma_\ell^1) \rightarrow \pi_1(X)$ and $\alpha_*: \pi_1(X) \rightarrow \pi_1(Z)$ is an amenable group.  (This image is well-defined only up to conjugation, because base points are not specified.)

\begin{corollary}\label{amen-red-cor}
Let $c$ be a cycle on space $X$ with partial coloring $\{V_\ell\}$.  Suppose that $Z$ is a space with contractible universal cover, and let $\alpha : X \rightarrow Z$ be a continuous map such that for every connected component of every induced 1-complex $\Sigma^1_\ell$ of $c$, the $\alpha$-image of the fundamental group in $\pi_1(Z)$ is an amenable group.  Then the simplicial norm of the $\alpha$-image of the homology class $[c] \in H_*(X)$ represented by the cycle $c = \sum r_i \sigma_i$ (where $r_i \in \mathbb{R}$ are coefficients and $\sigma_i$ are simplices) satisfies the bound
\[\Vert \alpha_*[c]\Vert_{\Delta} \leq \sum_{\sigma_i \textrm{ essential}} \abs{r_i}.\]
\end{corollary}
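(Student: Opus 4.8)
The plan is to deduce Corollary~\ref{amen-red-cor} from the Amenable Reduction Lemma (Lemma~\ref{amen-red}) by two modifications of the data: first enlarge $X$ so that all vertices of $c$ can be pushed to a single point, and then refine the partial coloring so that the amenable subgroups it produces shrink to the per-component ones appearing in the hypothesis.

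For the first modification, let $P \subseteq X$ be the finite set of images under $\sigma$ of the vertices of $\Sigma$, and let $X' = X \cup C(P)$ be obtained by attaching a cone over $P$ with apex $a$; for each vertex $w$ of $\Sigma$ write $\epsilon_w$ for the cone edge from $a$ to $\sigma(w)$, and let $\iota \colon X \hookrightarrow X'$ be the inclusion. Since $Z$ has contractible universal cover it is path-connected, so I can fix a point $z_0 \in Z$ and, for each $p \in P$, a path $\lambda_p$ in $Z$ from $z_0$ to $\alpha(p)$; these determine an extension $\alpha' \colon X' \to Z$ of $\alpha$ that sends $a$ to $z_0$ and the cone edge over $p$ to $\lambda_p$, so that $\alpha' \circ \iota = \alpha$. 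I would then apply the standard ``push the vertices to the cone point'' operation: pushing each vertex $w$ of $\Sigma$ along its cone edge $\epsilon_w$ is canonical on faces, so it replaces the triple $(\Sigma, c_\Sigma, \sigma)$ by a triple $(\Sigma, c_\Sigma, \widehat\sigma)$ with the \emph{same} $\Delta$-complex and simplicial cycle but a new map $\widehat\sigma \colon \Sigma \to X'$ sending every vertex of $\Sigma$ to $a$, and such that $\widehat\sigma$ is homotopic to $\iota \circ \sigma$. Put $c' = \widehat\sigma_* c_\Sigma = \sum r_i \widehat\sigma_i$; this is a cycle on $X'$ with $[c'] = \iota_*[c]$, all of whose vertices map under $\alpha'$ to $z_0$, and $\alpha'_*[c'] = (\alpha' \circ \iota)_*[c] = \alpha_*[c]$ in $H_*(Z)$.

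For the second modification, refine $\{V_\ell\}$: for each $\ell$ and each connected component $C$ of $\Sigma_\ell^1$ having at least one edge, let $V'_{(\ell,C)}$ be the set of vertices of $C$ lying in $V_\ell$. These sets are pairwise disjoint and each lies in some $V_\ell$, so $\{V'_{(\ell,C)}\}$ is a partial coloring refining $\{V_\ell\}$. Two points must be checked. First, the refinement does not increase the essential count: if a simplex of $\Sigma$ has two distinct vertices $u, u'$ in a common $V_\ell$, then the face of that simplex joining $u$ to $u'$ (any two distinct vertices of a simplex of a $\Delta$-complex span an edge) lies in $\Sigma_\ell^1$, so $u$ and $u'$ lie in the same component $C$ and hence in a common $V'_{(\ell,C)}$; combining this with the fact that a loop null-homotopic in $X$ remains null-homotopic in $X'$, every simplex that is non-essential for $c$ with respect to $\{V_\ell\}$ is non-essential for $c'$ with respect to $\{V'_{(\ell,C)}\}$, whence $\sum_{\widehat\sigma_i \text{ essential}} \abs{r_i} \le \sum_{\sigma_i \text{ essential}} \abs{r_i}$ for the respective colorings. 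Second, the amenability hypothesis of Lemma~\ref{amen-red} holds for $(c', \alpha', \{V'_{(\ell,C)}\})$: an edge of $c'$ with both endpoints in $V'_{(\ell,C)}$ has both endpoints in $C$, hence lies in $C$, so the subgroup of $\pi_1(Z, z_0)$ generated by the $\alpha'$-images of such edges is contained in $(\alpha' \circ \widehat\sigma)_*\pi_1(C)$; since $\widehat\sigma$ is homotopic to $\iota \circ \sigma$ and $\alpha' \circ \iota = \alpha$, the map $(\alpha' \circ \widehat\sigma)|_C$ is homotopic to $(\alpha \circ \sigma)|_C$, so on $\pi_1$ its image is a conjugate of $(\alpha \circ \sigma)_*\pi_1(C)$, which is amenable by hypothesis; subgroups and conjugates of amenable groups are amenable.

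Then Lemma~\ref{amen-red} applied to $c'$ on $X'$ with the map $\alpha' \colon X' \to Z$ and partial coloring $\{V'_{(\ell,C)}\}$ gives
\[\Vert \alpha_*[c] \Vert_\Delta = \Vert \alpha'_*[c'] \Vert_\Delta \le \sum_{\widehat\sigma_i \text{ essential}} \abs{r_i} \le \sum_{\sigma_i \text{ essential}} \abs{r_i},\]
which is the asserted bound. The hard part is the second modification: passing to per-component amenability forces the coloring to be refined, and one must be sure the refinement creates no new essential simplices --- this is exactly where the combinatorial fact that any two vertices of a simplex are joined by an edge enters. The cone construction and the vertex-pushing in the first modification are routine, but must be carried out at the level of the $\Delta$-complex $\Sigma$ rather than the singular chain complex, so that the partial coloring transfers without change.
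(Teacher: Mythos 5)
Your reduction has a genuine gap in the amenability verification, and the inclusion you assert is backwards.

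You claim that after pushing vertices along cone edges, ``the subgroup of $\pi_1(Z, z_0)$ generated by the $\alpha'$-images of such edges is contained in $(\alpha' \circ \widehat\sigma)_*\pi_1(C)$.'' This is false in general. After the pushing, every vertex of $\Sigma$ maps to $a$, so for \emph{every} edge $e$ of $C$ --- not just for closed edge-paths --- the image $\widehat\sigma(e)$ is already a loop at $a$, and $\Gamma := \langle \alpha'\widehat\sigma(e) : e \in E(C)\rangle$ is generated by all of these. On the other hand, $(\alpha'\widehat\sigma)_*\pi_1(C, *_C)$ is only the image of closed edge-paths in $C$. Since any closed edge-path in $C$ maps to a word in the $\widehat\sigma(e)$'s, the true inclusion is $(\alpha'\widehat\sigma)_*\pi_1(C) \subseteq \Gamma$, and it is typically strict: if $C$ is a tree with $k$ edges then $\pi_1(C)=0$, so $(\alpha\sigma)_*\pi_1(C)$ is trivial (and the hypothesis of the corollary is satisfied vacuously), yet $\Gamma$ is generated by $k$ loops $\epsilon_{w_1(e)}^{-1}\cdot\sigma(e)\cdot\epsilon_{w_2(e)}$ that can be chosen, via $\sigma$ and the arbitrary paths $\lambda_p$ defining $\alpha'$, to be free generators of a nonabelian free subgroup of $\pi_1(Z)$. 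In that case $\Gamma$ is not amenable, and Lemma~\ref{amen-red} does not apply to your modified data.

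The source of the trouble is that the cone edges $\epsilon_w$ are unrelated to the $1$-complex $C$; they introduce new, arbitrary path segments between $a$ and $\sigma(w)$, so the conjugation they effect has no reason to land anything in a conjugate of $(\alpha\sigma)_*\pi_1(C)$. The paper's proof avoids exactly this pitfall by choosing the collapsing homotopy component-wise \emph{inside} $\Sigma_\ell^1$: for each vertex $v$ in a component, $v$ is dragged along the $\sigma$-image of a path in $\Sigma_\ell^1$ to a fixed base vertex $*_\ell$, and only then along a single fixed path $\gamma_\ell$ out to the common base point. With that choice, $\sigma'(e)$ for any edge $e$ of $\Sigma_\ell^1$ is literally $\gamma_\ell^{-1}\cdot\sigma(\text{a closed edge-path in }\Sigma_\ell^1)\cdot\gamma_\ell$, which places $\Gamma_\ell$ inside the $\gamma_\ell$-conjugate of $\alpha_*\sigma_*\pi_1(\Sigma_\ell^1)$. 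Your second modification (refining the coloring by connected components) is the same as the paper's opening ``without loss of generality each $\Sigma_\ell^1$ is connected'' step and is fine; and your observation that two vertices of a simplex in a common $V_\ell$ always span an edge is correct and used similarly in the paper. But the cone construction, attractive as a way to sidestep path-connectedness of $X$, must be replaced (or supplemented) by per-component collapsing paths lying in $\sigma(\Sigma_\ell^1)$ before the conclusion follows.
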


\begin{proof}
Without loss of generality we may assume that every induced $1$-complex $\Sigma_\ell^1$ is connected; this is because if we subdivide $V_\ell$ according to the connected components of $\Sigma_\ell^1$, there is no change to which simplices are essential.  

We view $c$ as the triple $(\Sigma, c_\Sigma, \sigma)$ and homotope the map $\sigma : \Sigma \rightarrow X$ to a new map $\sigma'$ so that every vertex of $\Sigma$ moves to the same point of $X$ and the resulting group $\Gamma_\ell$ generated by the $\alpha \circ \sigma'$-images of the edges in $\Sigma_\ell^1$ is a subgroup of $\alpha_*\sigma_*\pi_1(\Sigma_\ell^1)$, which we have assumed is amenable.  Specifically, we choose one base point $*$ in $X$ and one base point $*_\ell$ in each $\Sigma_\ell^1$, and choose a path $\gamma_\ell$ in $X$ from $\sigma(*_\ell)$ to $*$.  Then for each vertex $v$ in $\Sigma_\ell^1$ we choose a path in $\Sigma_\ell^1$ from $v$ to $*_\ell$.  We homotope $\sigma$ in a small neighborhood of $v$ in $\Sigma$ so that $v$ travels along the image in $X$ of that path to $\sigma(*_\ell)$, and then along $\gamma_\ell$ to $*$.  We apply this sequence of homotopies to $\sigma$, one for each vertex, to get some new map $\sigma'$.

For every edge in $\Sigma_\ell^1$, the restriction of $\sigma'$ to that edge is a loop in $X$ which is obtained by taking the $\sigma$-image of a loop in $\Sigma_\ell^1$ and conjugating by the path $\gamma_\ell$.  Thus, the subgroup $\Gamma_\ell$ resulting from $\sigma'$ is a subgroup of the conjugation of $\alpha_*\sigma_*\pi_1(\Sigma_\ell^1, *_\ell)$ by $\alpha \circ \gamma_\ell$.  We have assumed that $\alpha_*\sigma_*\pi_1(\Sigma_\ell^1)$ is an amenable group, and every subgroup of an amenable group is amenable, so $\Gamma_\ell$ is amenable.  We may now apply the Amenable Reduction Lemma (Lemma~\ref{amen-red}) to the cycle $c' = (\Sigma, c_\Sigma, \sigma')$.

\end{proof}

In~\cite{Alpert15} when proving the Localization Lemma from~\cite{Gromov09}, we assume that the ambient space is a manifold and the strata are submanifolds; in that case the strata have tubular neighborhoods.  To generalize the lemma to CW-complexes, we define the NDR property, which generalizes the tubular neighborhood property and is satisfied by CW-complexes.  We say that a generalized stratification on a space $X$ has the \textbf{\emph{NDR property}} if every stratum $S$ is a neighborhood deformation retract, in the following sense: there is a neighborhood $U_S$ of $S$ in $X$ and a homotopy $H : U_S \times [0, 1] \rightarrow X$ such that the start map $H_0 : U_S \rightarrow X$ is the inclusion, the image $H_1(U_S)$ of the end map is contained in $S$, and the restriction $H_t\vert_S : S \rightarrow X$ to $S$ at every time $t$ is the inclusion.

\begin{lemma}[Generalized Localization Lemma]\label{gen-loc}
Let $X$ be a compact metrizable space, and let $\mathcal{S}$ be a generalized stratification on $X$ with the NDR property, such that $\mathcal{S}$ has finitely many strata and each stratum is connected.  Let $Z$ be a space with contractible universal cover, and suppose $\alpha : X \rightarrow Z$ is a continuous map such that for every stratum $S$, the group $\alpha_*\pi_1(S) \subseteq \pi_1(Z)$ is amenable.  Let $h \in H_j(X)$ be a homology class, and suppose that $A$ is a closed subset of $X$ such that among the strata intersecting $A$, there is no totally ordered chain of more than $j$ strata.  Let $h_{\rel}$ be the image of $h$ in $H_j(X, A)$.  Then the simplicial norm of $\alpha_*h \in H_j(Z)$ satisfies the bound
\[\Vert \alpha_*h \Vert_{\Delta} \leq \Vert h_{\rel} \Vert_{\Delta, \ess}^{\mathcal{S}}.\]
\end{lemma}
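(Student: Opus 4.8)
The plan is to deduce the lemma from Corollary~\ref{amen-red-cor}, using the generalized stratification $\mathcal{S}$ itself to supply the partial coloring. Fix $\varepsilon>0$ and choose a relative cycle $c$ representing $h_{\rel}$ and satisfying the \emph{cellular}, \emph{order}, \emph{internality}, and \emph{loop} conditions, with $\Vert c\Vert_{\Delta,\ess}^{\mathcal{S}}\le\Vert h_{\rel}\Vert_{\Delta,\ess}^{\mathcal{S}}+\varepsilon$ (if no such cycle exists the right-hand side is $+\infty$ and there is nothing to prove). Write $c=(\Sigma,c_\Sigma,\sigma)$ as in the setup.

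The heart of the argument is to turn the relative cycle $c$ into a genuine absolute cycle on $X$ without creating essential simplices. Since $h_{\rel}$ is the image of the absolute class $h$, the $(j-1)$-cycle $\partial c$, supported in $A$, is null-homologous in $A$. I would first choose a $j$-chain $d$ on $X$ with $\partial d=\partial c$ and $[c-d]=h$ in $H_j(X)$: as $d$ ranges over fillings of $\partial c$ by chains in $A$, the classes $[c-d]$ form a coset of $\iota_*H_j(A)$, and $h$ lies in this coset because $h-[c-d_0]\in\ker\bigl(H_j(X)\to H_j(X,A)\bigr)=\iota_*H_j(A)$. The technical point is then to arrange, by subdividing $d$ and applying homotopies supported in the NDR neighborhoods $U_S$, that $d$ also satisfies all four consistency conditions while every simplex of $d$ meets only strata that intersect $A$. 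Granting this, the \emph{order} condition forces each $j$-simplex of $d$ to have image inside a totally ordered chain of strata all meeting $A$, hence a chain of at most $j$ strata by hypothesis; by pigeonhole two of its $j+1$ vertices then lie in one stratum, so $d$ contributes no essential simplices.

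Now set $\tilde c=c-d=\sum r_i\sigma_i$, an absolute cycle with $[\tilde c]=h$. Because the four conditions are imposed simplex by simplex, $\tilde c$ inherits all of them, and I view it as a triple $(\Sigma,c_\Sigma,\sigma)$. By the \emph{cellular} condition every vertex of $\Sigma$ maps into one stratum, so I define the partial coloring by placing a vertex in $V_\ell$ when its image lies in the stratum $S_\ell$. By the \emph{internality} condition any edge of $\Sigma$ with both endpoints in $V_\ell$ has image inside $S_\ell$, so the composite $\pi_1(\Sigma^1_\ell)\to\pi_1(X)\to\pi_1(Z)$ factors through $\alpha_*\pi_1(S_\ell)$, which is amenable by hypothesis; since subgroups of amenable groups are amenable, the hypothesis of Corollary~\ref{amen-red-cor} is met. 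By the \emph{loop} condition a simplex of $\tilde c$ is essential precisely when its vertices lie in $j+1$ distinct strata, and by the previous paragraph no such simplex comes from $d$; hence $\sum_{\sigma_i\text{ essential}}\abs{r_i}\le\Vert c\Vert_{\Delta,\ess}^{\mathcal{S}}$. Corollary~\ref{amen-red-cor} now gives $\Vert\alpha_*h\Vert_\Delta=\Vert\alpha_*[\tilde c]\Vert_\Delta\le\Vert c\Vert_{\Delta,\ess}^{\mathcal{S}}\le\Vert h_{\rel}\Vert_{\Delta,\ess}^{\mathcal{S}}+\varepsilon$, and letting $\varepsilon\to0$ proves the lemma.

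The main obstacle I foresee is the construction of the filling $d$: it must represent the right homology class, keep to strata meeting $A$, and satisfy the \emph{cellular}, \emph{order}, \emph{internality}, and \emph{loop} conditions simultaneously. This is exactly where the NDR property replaces the tubular-neighborhood projections used in the manifold case of the Localization Lemma, and one must check that the subdivisions and deformation retractions used to enforce the \emph{cellular} and \emph{internality} conditions neither slide simplices across strata disjoint from $A$ nor spoil the \emph{order} and \emph{loop} conditions; I expect to model this step on the corresponding argument in \cite{Alpert15}, with the NDR homotopies $H:U_S\times[0,1]\to X$ playing the role of the normal projections there.
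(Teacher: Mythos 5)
Your overall framework is right — reduce to Corollary~\ref{amen-red-cor} by extending the relative cycle to an absolute cycle and choosing a partial coloring that makes the new simplices non-essential — and you correctly identify the one genuinely hard step. Unfortunately, that step is a real gap, and the paper's proof indicates it should be handled in a quite different way than you propose.

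The problem is your requirement that the filling chain $d$ itself satisfy the \emph{cellular}, \emph{order}, \emph{internality}, and \emph{loop} conditions. For a general closed set $A$ and a general stratification, there is no reason a chain in $A$ (or a chain near $A$) can be subdivided and homotoped into a \emph{cellular} chain: the cellular condition demands that the interior of every face of every simplex lie in a single stratum, but the strata may intersect $A$ in arbitrary closed sets, and the NDR homotopies $H:U_S\times[0,1]\to X$ push points toward a single stratum $S$ without giving any way to decompose a simplex that straddles several strata into stratum-respecting pieces. The paper's proof of Lemma~\ref{gen-loc} avoids this entirely: it does \emph{not} make the chain added in $A$ satisfy any of the four conditions. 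Instead, Sublemma~\ref{partition} constructs an auxiliary finite partition $\{P_S\}$ of $A$ (one piece per stratum, built with controlled metric separation $\delta$ between pieces coming from incomparable strata, and with $N_\delta(P_S)\subset U_S$), and the vertices of the finely subdivided filling chain are colored according to which $P_S$ they land in — not according to strata. Two cylinder chains $c_1, c_2$ are inserted to interpolate between the stratum-coloring on $\partial c_{\rel}$ and the $P_S$-coloring on the interior filling. Non-essentiality of the new simplices then follows from the pigeonhole bound on chain length together with the $\delta$-separation of the $P_S$'s, and amenability of the $1$-complexes $\Sigma_\ell^1$ follows because $\Sigma_\ell^1\subseteq S_\ell\cup N_\delta(P_{S_\ell})\subseteq U_{S_\ell}$ can be homotoped into $S_\ell$ by the NDR retraction.

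So the key idea you are missing is that the partial coloring supplied to Corollary~\ref{amen-red-cor} does \emph{not} have to be the coloring-by-strata everywhere: it can be any labeling that makes the added simplices non-essential and keeps each $\Sigma_\ell^1$ inside the NDR neighborhood $U_{S_\ell}$. Replacing strata by a suitably chosen partition of $A$ is what makes the construction go through without having to impose the four consistency conditions on the filling. Your use of \emph{internality} to get amenability on the $c_{\rel}$ part, and your pigeonhole count using the chain-length hypothesis, are both correct and match the paper; the gap is entirely in the treatment of $d$.
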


The idea of the proof is to take any relative cycle representing $h_{\rel} \in H_j(X, A)$ and extend it to a cycle representing $h$, by adding a chain in $A$ with no essential simplices with respect to a partial coloring that comes from the stratification.  Then we apply Corollary~\ref{amen-red-cor} to show that the new non-essential simplices do not contribute to the simplicial norm of $\alpha_* h$, thus finishing the proof.  The following sublemma states that any representative of $h_{\rel}$ may be extended by a chain in $A$ to a representative of $h$.

\begin{sublemma}\label{les}
Let $(X, A)$ be a pair of spaces, let $h \in H_j(X)$ be a homology class, and let $h_{\rel} \in H_j(X, A)$ be the corresponding relative homology class.  Let $c_{\rel}$ be any relative cycle such that $[c_{\rel}] = h_{\rel}$.  Then there exists a chain $c_A$ in $A$ such that $\del c_A = -\del c_{\rel}$ (so $c_{\rel} + c_A$ is a cycle in $X$) and $[c_{\rel} + c_A] = h$.
\end{sublemma}

\begin{proof}
The proof is by chasing the long exact sequence of homology for the pair $(X, A)$.  By the exactness at $H_j(X, A)$, we see that the class of $\del c_{\rel}$ must be zero in $H_{j-1}(A)$, so there is some chain $c_1$ in $A$ such that $\del c_1 = \del c_{\rel}$.  By the exactness at $H_j(X)$, because $h - [c_{\rel} - c_1] \in H_j(X)$ maps to zero in $H_j(X, A)$ there is some cycle $c_2$ in $A$ such that $[c_2] = h - [c_{\rel} - c_1]$.  We set $c_A = -c_1 + c_2$.
\end{proof}

In order to construct a partial coloring on some chain $c_A$ in $A$ such that there are no essential simplices, we modify the strata to create a related partition of $A$, and then color the vertices of $c_A$ according to which set of the partition they are in.  In the following sublemma we construct this partition of $A$.

\begin{sublemma}\label{partition}
Let $X$ be a compact metric space, with a generalized stratification satisfying the NDR property, such that there are finitely many strata.  Let $A$ be any closed subset of $X$, and let $\varepsilon > 0$ be arbitrary.  Then there exists a partition $\{P_S\}$ of $A$, containing one subset $P_S$ for each stratum $S$, and $\delta > 0$ such that the following properties hold:
\begin{itemize}
\item For every stratum $S$, the set $P_S$ is contained in the $\varepsilon$-neighborhood of $S$.
\item If $S$ and $S'$ are incomparable strata, then the distances $\dist(P_S, P_{S'})$, $\dist(P_S, S')$, and $\dist(S, P_{S'})$ are all greater than $\delta$.
\item For every stratum $S$, the $\delta$-neighborhood of $P_S$ in $A$ is contained in an NDR neighborhood $U_S$ of $S$ in $X$.
\end{itemize}
\end{sublemma}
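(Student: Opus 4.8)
The plan is to build the partition by stripping a neighborhood off of $A$ one stratum at a time, going through the strata from the top of the partial order downward. Fix, for each stratum $S$, an \emph{open} NDR neighborhood $U_S$ (a neighborhood always contains an open subneighborhood, which is still NDR), and fix a linear extension $S_1, \dots, S_m$ of the partial order, so that $S_k \preceq S_l$ implies $k \le l$. Put $X_{\le i} := S_1 \cup \dots \cup S_i$. Since the closure of a stratum meets only strata of index at most its own, $X_{\le i}$ is closed, $X_{\le i} = X_{\le i-1}\cup \overline{S_i}$, and $\overline{S_i}\setminus S_i \subseteq X_{\le i-1}$. For $\eta>0$ define the \emph{core} $S_i^\eta := \{y \in \overline{S_i} : \dist(y, X_{\le i-1}) \ge \eta\}$; this is a \emph{compact} subset of $S_i$ (possibly empty), and its union over $\eta>0$ is all of $S_i$. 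The whole point of passing to cores is that they are compact, while the strata themselves are not.

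I would then choose positive numbers in the order $\eta_1, \rho_1, \eta_2, \rho_2, \dots, \eta_m, \rho_m$. Given $\rho_1,\dots,\rho_{i-1}$, choose $\eta_i < \rho_k/m$ for all $k<i$ (no constraint if $i=1$); this fixes the compact core $S_i^{\eta_i}$. Now $S_i^{\eta_i}$ is a compact subset of the open set $U_{S_i}$, and for each $S_j$ incomparable to $S_i$ the set $S_i^{\eta_i}\subseteq S_i$ is disjoint from the compact set $\overline{S_j}$ (otherwise $S_i \preceq S_j$), hence at positive distance from it. So I can pick $\rho_i>0$ small enough that $\rho_i<\varepsilon$, that the closed $2\rho_i$-neighborhood of $S_i^{\eta_i}$ lies in $U_{S_i}$, and that $3\rho_i$ is smaller than $\dist(S_i^{\eta_i},\overline{S_j})$ for every $S_j$ incomparable to $S_i$, and smaller than $\dist(\overline{S_i},S_j^{\eta_j})$ for every such $S_j$ with $j<i$ --- all positive quantities already determined by the earlier choices. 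Set $\delta := \min_i \rho_i$, and define the partition by downward recursion: $A_m := A$; for $i=m,\dots,1$, let $P_{S_i} := A_i \cap N_{\rho_i}(S_i^{\eta_i})$ and $A_{i-1} := A_i \setminus P_{S_i}$, where $N_r(Y)$ denotes the open $r$-neighborhood.

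Verifying the conclusions is then bookkeeping. The $P_{S_i}$ are pairwise disjoint subsets of $A$ by construction, so they partition $A$ once we know $A_0=\emptyset$; I would get this from the invariant $A_i \subseteq \overline{N}_{t_i}(X_{\le i})$ with $t_i := \sum_{l>i}\eta_l$ (so $t_m=0$ and $\overline{N}_{t_0}(X_{\le 0})=\emptyset$), proved by downward induction: a point $x\in A_{i-1}$ lies within $t_i$ of $X_{\le i-1}$ or within $t_i$ of $\overline{S_i}$; in the latter case, if its nearest point of $\overline{S_i}$ lies in $S_i$ at distance $\ge \eta_i$ from $X_{\le i-1}$, then it lies in $S_i^{\eta_i}$ and $\dist(x,S_i^{\eta_i}) \le t_i < \rho_i$ (using $\eta_l<\rho_i/m$ for $l>i$), so $x$ would have been removed into $P_{S_i}$, a contradiction; otherwise $\dist(x,X_{\le i-1}) < t_i+\eta_i = t_{i-1}$. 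The first bullet holds since $P_{S_i}\subseteq N_{\rho_i}(S_i^{\eta_i})\subseteq N_\varepsilon(S_i)$; the third holds since the $\delta$-neighborhood of $P_{S_i}$ in $A$ lies in $N_{\rho_i+\delta}(S_i^{\eta_i})\subseteq \overline{N}_{2\rho_i}(S_i^{\eta_i})\subseteq U_{S_i}$, an NDR neighborhood. For the second bullet, if $S_i, S_j$ are incomparable with $i<j$, then $P_{S_i}\subseteq N_{\rho_i}(S_i^{\eta_i})$ and $P_{S_j}\subseteq N_{\rho_j}(\overline{S_j})$, so $\dist(P_{S_i},P_{S_j})\ge \dist(S_i^{\eta_i},\overline{S_j})-\rho_i-\rho_j$, $\dist(P_{S_i},S_j)\ge \dist(S_i^{\eta_i},\overline{S_j})-\rho_i$, and $\dist(S_i,P_{S_j})\ge \dist(\overline{S_i},S_j^{\eta_j})-\rho_j$, each exceeding $\delta$ by the inequalities built into $\rho_i$ and $\rho_j$.

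The step I expect to require the most care --- and the reason for introducing the cores and interleaving the choice of the $\eta_i$ and $\rho_i$ --- is coping with the non-compactness of the strata: an open cell's NDR neighborhood may be pinched arbitrarily thin near the cell's frontier, so there is no uniform $r$ with $N_r(S_i)\subseteq U_{S_i}$; likewise the closures of incomparable strata can intersect along lower strata, so $N_r(\overline{S_i})$ and $N_r(\overline{S_j})$ need not be disjoint. Restricting to the compact cores $S_i^{\eta_i}$ cures both defects at once, at the cost of having to check that the part of $A$ near the frontier of $S_i$ --- the collar that escapes the core --- really is close to $X_{\le i-1}$, so that it gets absorbed by the lower strata later in the recursion; that is precisely the content of the invariant above, and it is what forces $\eta_i$ to be small relative to the earlier $\rho_k$ and $\rho_i$ to be small relative to the now-positive distances between cores and closures.
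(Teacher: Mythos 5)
Your proof is correct, but it takes a genuinely different route from the paper's. Both arguments fix a linear extension $S_1,\dots,S_m$ of $\preceq$, recursively carve $A$ into pieces each contained in a thin neighborhood of a compact subset of the corresponding stratum, and take $\delta$ to be a minimum of the radii so chosen. The paper recurses \emph{bottom-up}: having built $P_{S_1},\dots,P_{S_{i-1}}$ so that their union is an $A$-open neighborhood of $A\cap\bigcup_{j<i}S_j$, the leftover $K_i=(A\cap S_i)\setminus\bigcup_{j<i}P_{S_j}$ is automatically compact (its closure in $\overline{S_i}$ can only escape into the already-covered lower strata), and $P_{S_i}$ is defined as the uncovered part of $A$ within $\varepsilon_i$ of $K_i$; exhaustiveness of the partition is the recursion invariant itself. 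You recurse \emph{top-down} and obtain compactness by explicit truncation instead: the cores $S_i^{\eta_i}$ cut away an $\eta_i$-collar near $X_{\le i-1}$, which forces you to interleave two sequences of constants $\eta_i,\rho_i$ and to prove a separate downward induction (the invariant $A_i\subseteq\overline{N}_{t_i}(X_{\le i})$) to see that the collar escaping each core is absorbed by the lower strata, so $A_0=\emptyset$. The trade is clear: the paper's version is leaner because compactness of $K_i$ and exhaustiveness both fall out of the one recursion, while yours makes the compactness visible and explicit up front at the cost of the extra bookkeeping; the substantive content (positive distance between incomparable strata after truncating to a compact piece, and a small enough radius to stay inside $U_{S_i}$) is the same in both.
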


\begin{proof}
We start by numbering the strata $S_1, \ldots, S_r$ such that if $S_j \preceq S_i$ then $j \leq i$.  We construct $P_{S_i}$ in order, with a recursion assumption that if $P_{S_j}$ have been constructed for all $j < i$, then their union $\bigcup_{j < i} P_{S_j}$ is an $A$-open neighborhood of the (closed) union $A \cap \left(\bigcup_{j < i} S_j \right)$.  To construct $P_{S_i}$, we let $K_i$ be the complement of $\bigcup_{j < i} P_{S_j}$ in $A \cap S_i$; the set $K_i$ is compact.  If $S'$ is any stratum so that its closure $\overline{S'}$ is disjoint from $S_i$ (equivalently $S' \not \rightarrow S_i$), then $\overline{S'}$ has positive distance from $K_i$.  We select $\varepsilon_i < \varepsilon$ such that $3\varepsilon_i$ is less than this positive distance for all such strata $S'$ and such that the $3\varepsilon_i$-neighborhood $N_{3\varepsilon_i}(K_i)$ (taken in $A$ or $X$, it doesn't matter) is contained in an NDR neighborhood $U_{S_i}$ of $S_i$ in $X$.  We let $P_{S_i}$ be the complement of $\bigcup_{j < i}P_{S_j}$ in $A \cap N_{\varepsilon_i}(K_i)$.  The recursion assumption is preserved: the new union $\bigcup_{j \leq i} P_{S_j}$ is an $A$-open set containing $A \cap \left(\bigcup_{j \leq i} S_j \right)$.

Having constructed all $P_{S_i}$, we set $\delta < \min \varepsilon_i$ and check that the construction satisfies the three conclusions of the lemma.  The first conclusion is true because $P_{S_i}$ is in the $\varepsilon_i$-neighborhood of $S_i$.  For the second conclusion, suppose that $S_i$ and $S_j$ are incomparable strata.  Because $S_j \not\rightarrow S_i$, all of $S_j$ lies further than $3\varepsilon_i$ away from $K_i$, whereas all of $P_{S_i}$ lies within $\varepsilon_i$ of $K_i$.  Thus we have $\dist(P_{S_i}, S_j) > 2\varepsilon_i > \delta$ and similarly $\dist(P_{S_i}, P_{S_j}) > 2\varepsilon_i - \varepsilon_j$.   We may reverse the roles of $S_i$ and $S_j$ if necessary, to assume $\varepsilon_i \geq \varepsilon_j$, in which case $2\varepsilon_i - \varepsilon_j > \delta$.  The third conclusion is true because $N_{\delta}(P_{S_i}) \subseteq N_{3\varepsilon}(K_i) \subseteq U_{S_i}$.
\end{proof}

We are almost ready for the proof of the Generalized Localization Lemma (Lemma~\ref{gen-loc}).  First we record an easy sublemma that says that the \emph{cellular}, \emph{order}, \emph{internality}, and \emph{loop} properties are preserved by barycentric subdivision.

\begin{sublemma}\label{bary}
Let $X$ be a space with a generalized stratification, and let $c = (\Sigma, c_\Sigma, \sigma)$ be a cycle satisfying the \emph{cellular} property.  Then the first barycentric subdivision of $c$ satisfies the \emph{cellular}, \emph{order}, \emph{internality}, and \emph{loop} properties.
\end{sublemma}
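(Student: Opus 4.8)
The plan is to reduce all four conditions to a single observation about how the \emph{cellular} hypothesis on $c$ interacts with flags of faces. Recall that the first barycentric subdivision $\Sigma'$ has the same underlying space as $\Sigma$, that $\sigma' : \Sigma' \to X$ is literally $\sigma$ precomposed with the canonical homeomorphism, and that a simplex of $\Sigma'$ corresponds to a flag $F_0 \subsetneq F_1 \subsetneq \cdots \subsetneq F_k$ of faces of a single simplex $\Delta$ of $\Sigma$ (one of the copies of $\Delta^j$), with vertices the barycenters $b(F_0), \dots, b(F_k)$. First I would record the following monotonicity statement. By the \emph{cellular} property of $c$, for each face $F$ of $\Delta$ the image $\sigma(\mathrm{int}\, F)$ lies in a single stratum, which I call $S(F)$. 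If $F \subsetneq F'$, then $\mathrm{int}(F) \subseteq F \subseteq F' = \overline{\mathrm{int}(F')}$, so $\sigma(\mathrm{int}\, F)$, which lies in $S(F)$, also lies in $\overline{\sigma(\mathrm{int}\, F')} \subseteq \overline{S(F')}$; hence $S(F)$ meets $\overline{S(F')}$, and by the definition of the stratification order this gives $S(F') \to S(F)$, i.e. $S(F) \preceq S(F')$. Therefore along any flag the strata form a totally ordered chain $S(F_0) \preceq S(F_1) \preceq \cdots \preceq S(F_k)$.

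Next I would use the elementary fact that a point $x = \sum_{p} t_p\, b(F_p)$ of the subdivided simplex $[b(F_0), \dots, b(F_k)]$ lies, viewed in $\Delta$, in the relative interior of $F_m$ where $m = \max\{\, p : t_p > 0\,\}$ --- because $b(F_m)$ has positive barycentric coordinate on every vertex of $F_m$ while each earlier $b(F_p)$ is supported inside $F_p \subsetneq F_m$. Hence $\sigma(x) \in S(F_m)$. Applying this to the relative interior of an arbitrary face $[b(F_{p_0}), \dots, b(F_{p_l})]$ of a subdivided simplex shows that the interior of that face maps into the single stratum $S(F_{p_l})$, which is the \emph{cellular} property for $\Sigma'$; applying it to every point of a subdivided simplex shows the simplex maps into $\bigcup_{m} S(F_m)$, which by the first paragraph is a totally ordered chain of strata, so no simplex of $\Sigma'$ meets two incomparable strata, which is the \emph{order} property.

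For \emph{internality}, let $H = [b(F_{p_0}), \dots, b(F_{p_l})]$ with $l \geq 1$ be a face of a simplex of $\Sigma'$ whose boundary maps into a stratum $S$ (the case $l = 0$ being vacuous). Each vertex $b(F_{p_q})$ lies in $\partial H$, so $\sigma(b(F_{p_q})) \in S$; since the strata partition $X$ and $\sigma(b(F_{p_q})) \in S(F_{p_q})$, this forces $S(F_{p_q}) = S$ for every $q$, in particular $S(F_{p_l}) = S$. By the \emph{cellular} property just established, $\mathrm{int}(H)$ maps into $S(F_{p_l}) = S$, so all of $H$ maps into $S$. Finally, for the \emph{loop} property I would show that $\Sigma'$ has no self-loop edges at all, so the condition holds vacuously: an edge of $\Sigma'$ joins $b(F)$ to $b(F')$ for some proper inclusion $F \subsetneq F'$ of faces of a single simplex of $\Sigma$, and the identifications that define $\Sigma$ --- hence the refined identifications defining $\Sigma'$ --- are generated by the face maps $\Delta^{j-1} \to X$ and so only identify interior points of faces of equal dimension; since $\dim F < \dim F'$, the vertices $b(F)$ and $b(F')$ of $\Sigma'$ are distinct. (In particular each old self-loop of $\Sigma$, an edge whose two endpoints had been identified, is cut by its barycenter into two non-loop edges forming a bigon.)

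I expect the main obstacle to be exactly this last point: making the $\Delta$-complex bookkeeping precise enough to be certain that barycentric subdivision introduces no new self-loops, given that $\Sigma$ itself may carry self-loops and may have distinct faces of a single simplex glued together. Everything else is a formal consequence of the monotonicity-along-flags observation together with the fact that the strata form a partition; no homological input is needed, since barycentric subdivision changes neither the underlying space nor the represented relative homology class.
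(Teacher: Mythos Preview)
Your proof is correct and follows essentially the same route as the paper's: you derive \emph{cellular} and \emph{order} from the monotonicity $S(F)\preceq S(F')$ along face inclusions, reduce \emph{internality} to the fact that the interior's stratum coincides with that of the top vertex $b(F_{p_l})$, and handle \emph{loop} by observing that barycenters of faces of different dimensions are always distinct points of $\Sigma$. The paper's proof records exactly these four observations, only more tersely; your version supplies the barycentric-coordinate computation and the $\Delta$-complex bookkeeping that the paper leaves implicit (and your caution about the loop step is well placed, since the paper's phrase ``honest simplicial complex'' is a slight overstatement---multi-edges can survive one subdivision---but the absence of self-loops, which is all that is needed, is exactly what your dimension argument establishes).
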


\begin{proof}
The \emph{cellular} property is immediate because every open face of the subdivision is contained in an open face of $c$.  The \emph{order} property is proven as follows.  In any simplex of $c$, if $f$ is a face in the boundary of another face $f'$, and $S$ and $S'$ are the strata of $f$ and $f'$, then those strata are related by $S' \rightarrow S$.  Then in the barycentric subdivision, every simplex is contained in a totally ordered chain of faces, so the corresponding strata are also totally ordered.  The \emph{internality} property holds because in any face of the subdivision, the stratum of the interior also appears as the stratum of one of the vertices (at least).  The \emph{loop} property holds because the barycentric subdivision of any $\Sigma$ is an honest simplicial complex with no loops.
\end{proof}

\begin{proof}[Proof of the Generalized Localization Lemma (Lemma~\ref{gen-loc})]
Let $c_{\rel}$ be a relative $(X, A)$ cycle that represents $h_{\rel}$ and satisfies the \emph{cellular}, \emph{order}, \emph{internality}, and \emph{loop} conditions.  We apply Sublemma~\ref{les} to get a chain $c_A$ in $A$ such that $c_{\rel} + c_A$ is a cycle representing $h$.  We apply Sublemma~\ref{partition} to get a partition $\{P_S\}$ of $A$ and a number $\delta > 0$.  We construct, as input to Corollary~\ref{amen-red-cor}, a cycle $c = c_{\rel} + c_1 + c_2 + c_{\delta}$ representing $h$ and a partial coloring on $c$ with one subset of vertices $V_\ell$ for each stratum $S_\ell$, as follows:
\begin{itemize}
\item To construct $c_\delta$, we start with $c_A$ and apply iterated barycentric subdivision until the diameter of each simplex in $X$ is less than $\delta$.  For each vertex $v$ of $c_\delta$, if $v \in P_{S_\ell}$, then $v \in V_\ell$.
\item $c_2$ is the cylinder $-\del c_\delta \times [0, 1]$, triangulated in such a way that no new vertices are created, and mapped to $X$ by the projection $-\del c_\delta \times [0, 1] \rightarrow -\del c_\delta$.  The vertices of $-\del c_\delta \times 1$ are identified with the vertices of $c_\delta$, so their partial coloring is determined by their membership in $P_{S_\ell}$.  The vertices of $-\del c_\delta \times 0$ have a different partial coloring: if $v \in S_\ell$, then $v \in V_\ell$.
\item $c_1$ is a subdivision of the cylinder $\del c_{\rel} \times [0, 1]$, mapped to $X$ by the projection $\del c_{\rel} \times [0, 1] \rightarrow \del c_{\rel}$.  The end $\del c_{\rel} \times 0$ is identified with $\del c_{\rel}$ and is not subdivided.  The end $\del c_{\rel} \times 1$ is divided by barycentric subdivision so that it may be identified with the $0$ end of $c_2$, which is equal to $-\del c_\delta$.  The middle of the cylinder is subdivided by concatenating the chain homotopies corresponding to barycentric subdivision, one for each iteration.  For each vertex $v$ of $c_1$, if $v \in S_\ell$, then $v \in V_\ell$.
\item For every vertex $v$ of $c_{\rel}$, if $v \in S_\ell$, then $v \in V_\ell$.
\end{itemize}

First we verify that every simplex in $c_1$, $c_2$, and $c_\delta$ is not essential.  Any loops in any of these chains are null-homotopic because of either the \emph{loop} property on $c_{\rel}$ or the barycentric subdivision as in Sublemma~\ref{bary}.  To show that two of the $j+1$ vertices of each simplex are labeled with the same subset $V_\ell$, we claim that for every two vertices of a simplex, the strata $S_\ell$ corresponding to their labels must be comparable strata.  In $c_1$ this is true by the \emph{order} property or Sublemma~\ref{bary}.  In $c_\delta$ this is true because $\dist(P_S, P_{S'}) > \delta$ whenever $S$ and $S'$ are incomparable strata.  In $c_2$, to show that every two vertices in a simplex have labels corresponding to comparable strata, there are three cases: if both vertices are in $c_1$, we use the argument for $c_1$; if both vertices are in $c_\delta$, we use the argument for $c_\delta$; and if there is one vertex in each, we use the fact that $\dist(S, P_{S'}) > \delta$ whenever $S$ and $S'$ are incomparable strata.  Thus, for every simplex in $c_1$, $c_2$, and $c_\delta$, the $j+1$ vertex labels correspond to a totally ordered chain of strata intersecting $A$, and every such chain has length at most $j$ so two of the labels must be the same.

Let $(\Sigma, c_\Sigma, \sigma)$ be the simplicial complex structure of $c$.  Next we show that for every $V_\ell$, the induced 1-complex $\Sigma_\ell^1$ in $\Sigma$ is homotopic in $X$ to something with image in $S_\ell$.  Then the $\alpha$-image of the fundamental group of every connected component of $\Sigma_\ell^1$ is a subgroup of the $\alpha$-image of the fundamental group of $S_\ell$, which we have assumed to be an amenable group; any subgroup of an amenable group is also amenable.  We homotope $\Sigma_\ell^1$ into $S_\ell$ by showing that it is included in the NDR neighborhood $U_{S_\ell}$, which homotopes into $S_\ell$.  Indeed, every edge in $\Sigma_\ell^1$ is in $S_\ell \cup N_{\delta}(P_{S_\ell})$, where $N_\delta$ denotes the $\delta$-neighborhood: if the edge is in $c_{\rel}$ or $c_1$, then both endpoints are in $S_\ell$, so the edge is in $S_\ell$ by the \emph{internality} condition or Sublemma~\ref{bary}; if the edge is in $c_2$ or $c_{\delta}$ but not in $c_1$, then at least one endpoint is in $P_{S_{\ell}}$, and the whole simplex has diameter less than $\delta$, so the edge is in $N_\delta(P_{S_\ell})$.  So we have 
\[\Sigma_\ell^1 \subseteq S_\ell \cup N_{\delta}(P_{S_\ell}) \subseteq U_{S_\ell},\]
and thus the whole 1-complex may be homotoped into $S_\ell$.

Now every hypothesis of Corollary~\ref{amen-red-cor} is satisfied, so we apply that corollary to obtain
\[\Vert \alpha_*h \Vert_\Delta \leq \Vert c_{\rel} \Vert_{\Delta, \ess}^{\mathcal{S}}.\]
\end{proof}

\begin{proof}[Proof of Lemma~\ref{simplicial-goal}]
First we observe that the generalized stratification of a CW-complex by open cells satisfies the NDR property: every skeleton $X^i$ is a neighborhood deformation retract in the next skeleton $X^{i+1}$, so if $S$ is a cell, we obtain the NDR neighborhood $U_S$ by taking the preimage of $S$ under the composition of all such retractions that have $i$ at least the dimension of $S$.

Every stratum is contractible, so its fundamental group is zero and hence amenable.  We take $A = X^{j-1}$, in which there is no totally ordered chain of more than $j$ strata.  Applying the Generalized Localization Lemma (Lemma~\ref{gen-loc}) finishes the proof.
\end{proof}

\section{Morse theory machinery}\label{morse}

We begin by briefly summarizing the significance of the Morse-Smale condition; all of this information can be found in the book~\cite{Banyaga04}.  An example of a Morse function that is not Morse-Smale is obtained by taking the height function of a torus $T^2$ stood on end, as in Figure~\ref{tori}.  The pair of index-$1$ critical points violates the transversality condition, because there is flow between them: the descending manifold of the upper point and the ascending manifold of the lower point both have dimension $1$, and their intersection in the $2$-dimensional surface has dimension $1$, so it is not transverse.  More generally, if $\mathcal{D}(p)$ and $\mathcal{A}(p)$ have a transverse and non-empty intersection, then the intersection has dimension $\ind(p) - \ind(q)$, which in particular must be strictly positive because the intersection contains $1$-dimensional flow lines.  Thus, in a Morse-Smale flow, for every flow line the starting critical point must have index strictly greater than the ending critical point.

If we tilt the torus slightly, then the height function becomes Morse-Smale.  In fact, every gradient vector field may be approximated in $C^\infty$ by a Morse-Smale gradient vector field (this is called the Kupka-Smale theorem), and every $C^1$-small perturbation of a Morse-Smale gradient vector field is still Morse-Smale (this is a theorem of Palis).  On the tilted torus, each of the index-$1$ critical points has two flow lines up to the index-$2$ critical point and two flow lines down to the index-$0$ critical point, as shown in Figure~\ref{tori}, giving eight $2$-part broken trajectories in all.  (Of course, the main theorem says nothing about this example because the torus does not admit a hyperbolic metric.)  It is not obvious in general that there are only finitely many flow lines between critical points of index difference $1$; this theorem is part of a larger family of compactness results, which say that the space of flow lines can be compactified by adding in the broken trajectories (with some appropriate notion of convergence).  The usual reason to count flow lines between critical points of index difference $1$ is because they define the differential in the Morse-Smale-Witten chain complex, which is a chain complex in which the set of $k$-chains is freely generated by the critical points of index $k$.  The homology of the Morse-Smale-Witten complex is called Morse homology, and it is isomorphic to the singular homology of the manifold.  In that setting the flow lines are counted with signs determined by orientations of the descending manifolds.  In this paper we count the flow lines without any sign and do not use Morse homology.

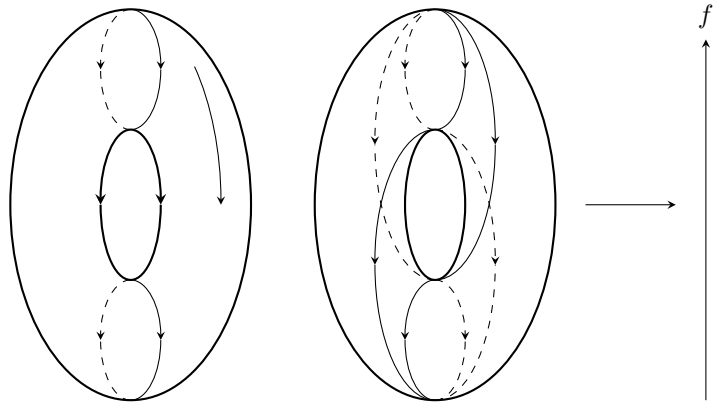
\begin{figure}
\begin{center}

\begin{tikzpicture}[scale = .4, >=stealth]
  \draw[thick] (0, 6.5) ellipse (4 and 6.5);
  \draw[thick, <->] (1, 6.5) arc (0:180:1 and 2.5);
  \draw[thick] (1, 6.5) arc (0:-180:1 and 2.5);

  \draw[dashed,->] (0, 4) arc (90:180:1 and 2);
  \draw[dashed] (0, 0) arc (270:180:1 and 2);
  \draw[->] (0, 4) arc (90:0:1 and 2);
  \draw (0, 0) arc (-90:0:1 and 2);
  \draw[dashed,->] (0, 13) arc (90:180:1 and 2);
  \draw[dashed] (0, 9) arc (270:180:1 and 2);
  \draw[->] (0, 13) arc (90:0:1 and 2);
  \draw (0, 9) arc (-90:0:1 and 2);

  \draw[<-] (3, 6.5) arc (0:45:3 and 6.5);
\end{tikzpicture}\hspace{20 pt}
\begin{tikzpicture}[scale = .4, auto, >=stealth]
  \draw[thick] (0, 6.5) ellipse (4 and 6.5);
  \draw[thick] (0, 6.5) ellipse (1 and 2.5);

  \draw[->] (0, 4) arc (90:180:1 and 2);
  \draw (0, 0) arc (270:180:1 and 2);
  \draw[dashed, ->] (0, 4) arc (90:0:1 and 2);
  \draw[dashed] (0, 0) arc (-90:0:1 and 2);
  \draw[dashed, ->] (0, 13) arc (90:180:1 and 2);
  \draw[dashed] (0, 9) arc (270:180:1 and 2);
  \draw[->] (0, 13) arc (90:0:1 and 2);
  \draw (0, 9) arc (-90:0:1 and 2);

  \draw[->] (0, 9) arc (90:180:2 and 4.5);
  \draw (0, 0) arc (270:180:2 and 4.5);
  \draw[dashed, ->] (0, 9) arc (90:0:2 and 4.5);
  \draw[dashed] (0, 0) arc (-90:0:2 and 4.5);
  \draw[dashed, ->] (0, 13) arc (90:180:2 and 4.5);
  \draw[dashed] (0, 4) arc (270:180:2 and 4.5);
  \draw[->] (0, 13) arc (90:0:2 and 4.5);
  \draw (0, 4) arc (-90:0:2 and 4.5);

  \path (5, 6.5) edge[->] node {} (8, 6.5);
  \node at (9, 12.75) {$f$};
  \path (9, 0) edge[->] node {} (9, 12);

\end{tikzpicture}
\end{center}
\caption{The height function of a standard torus stood on end, shown on the left, is not Morse-Smale because there is flow between the two index-$1$ critical points.  A slight perturbation of this function, shown on the right, is Morse-Smale and has eight $2$-part broken trajectories. (The picture is modeled on one in~\cite{Banyaga04}.)}\label{tori}
\end{figure}

Now we turn from the background to the specific theorems of Morse theory that are needed for the main theorem.  Let $(M, g)$ be a closed Riemannian manifold, and let $f : (M, g) \rightarrow \mathbb{R}$ be a Morse function satisfying Morse-Smale transversality.  We would like to say that the descending manifolds $\mathcal{D}(p)$ are the open cells of a nice CW-complex structure on $M$, in which the incidences between cells correspond somehow to breaking of trajectories and there is a triangulation that agrees with the CW-complex structure.  This wish is mostly true, but in the proof, the necessary analysis requires a special condition at the critical points.  

The pair $(f, g)$ is said to be \textbf{\emph{Euclidean}} if at every critical point $p$, there is a neighborhood with coordinates $(x_1, x_2) \in \mathbb{R}^{\ind(p)} \times \mathbb{R}^{n-\ind(p)}$ such that $g$ is equal to the standard Euclidean metric and $f$ is given by
\[f(x_1, x_2) = f(p) - \frac{1}{2}\abs{x_1}^2 + \frac{1}{2} \abs{x_2}^2.\]
(Here, $p$ has coordinates $(x_1, x_2) = (0, 0)$.)  We reduce the general case to the Euclidean case, and then work only in the Euclidean case; in~\cite{Wehrheim12}, Wehrheim justifies this reduction, which was stated by Franks in~\cite{Franks79} (or, see~\cite{Qin11} for a longer discussion).

\begin{theorem}[Remark 3.6 in~\cite{Wehrheim12}]\label{reduction}
Let $M$ be a closed manifold, and let $\Psi_s$ be the flow along some Morse-Smale negative gradient vector field.  Then there is a homeomorphism $h : M \rightarrow M$ such that $h \Psi_s h^{-1}$ is the flow along a Morse-Smale negative gradient vector field which in addition is Euclidean.
\end{theorem}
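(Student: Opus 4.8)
The plan is to put the flow into a standard form near each critical point and then patch. First I would invoke the Morse Lemma: near each critical point $p$ of the function $f$ generating $\Psi_s$, there are coordinates $(x_1, x_2) \in \mathbb{R}^{\ind(p)} \times \mathbb{R}^{n - \ind(p)}$ in which $f$ is exactly the standard quadratic $f(p) - \frac12 \abs{x_1}^2 + \frac12 \abs{x_2}^2$; in these coordinates the metric $g$ is some Riemannian metric which is in general not Euclidean. The negative gradient field $-\nabla_g f$ has $p$ as a hyperbolic zero whose unstable subspace is $\ind(p)$-dimensional, the signature being forced by that of the Hessian of $f$. Hence, by the Hartman--Grobman theorem together with the classical fact that two hyperbolic linear flows with the same stable and unstable dimensions are topologically conjugate, the flow $\Psi_s$ on a small neighborhood of $p$ is topologically conjugate, preserving the time parameter, to the standard Euclidean model flow $\dot x_1 = x_1$, $\dot x_2 = -x_2$ on a neighborhood of the origin.

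Second, I would construct the Euclidean pair. Keep $f$ unchanged; keep $g$ unchanged outside a small neighborhood $N$ of the critical set; and inside each component of $N$ --- in the Morse chart above, where $f$ is already the standard quadratic --- replace $g$ by the Euclidean metric of the chart, interpolating across a collar by means of a bump function and the convexity of the cone of positive-definite symmetric bilinear forms, so that the result $g'$ is a genuine Riemannian metric on $M$. Then $(f, g')$ is Euclidean near every critical point, and $-\nabla_{g'} f$ has the same zeros as $-\nabla_g f$. The one nontrivial point is that altering the metric can destroy the transversality of the descending and ascending manifolds, so $-\nabla_{g'} f$ need not be Morse--Smale: one must either carry out the interpolation delicately, or restore transversality by a further perturbation of the metric supported away from the critical set, where it cannot disturb the Euclidean normal forms, and then absorb that perturbation into the conjugacy using structural stability of Morse--Smale flows. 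This analytic bookkeeping is precisely what Wehrheim verifies, following Franks, and I would cite it rather than reproduce it.

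Third, I would assemble the homeomorphism $h$. Outside a slightly larger neighborhood $N' \supseteq N$ the flow $\Psi_s$ and the flow $\Psi'_s$ of $-\nabla_{g'} f$ literally coincide, so there $h$ may be taken to be the identity. Near each critical point $p$, the flow $\Psi'_s$ is the standard Euclidean model in the Morse chart, while $\Psi_s$ is topologically conjugate to that model by the first step; composing these two conjugacies gives a homeomorphism $h_p$, defined near $p$, with $h_p \Psi_s h_p^{-1} = \Psi'_s$. It then remains to interpolate, inside the collar $N' \setminus N$, between $h_p$ near the inner boundary and the identity near the outer boundary, through homeomorphisms conjugating $\Psi_s$ to $\Psi'_s$; since the flow has no zeros in the collar and is transverse to appropriate local sections, one can propagate the conjugacy along trajectories, using the isotopy extension theorem to keep it continuous in the transverse directions. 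I expect this last gluing to be the main obstacle: Hartman--Grobman supplies $h_p$ only on an uncontrolled neighborhood of $p$ and says nothing about the collar, so reconciling it with the ``identity outside $N'$'' condition while keeping $h$ an honest time-preserving conjugacy --- not merely an orbit equivalence --- requires the careful flow-box analysis. Once this is done, $h \Psi_s h^{-1} = \Psi'_s$ is the negative gradient flow of the Euclidean pair $(f, g')$, completing the proof.
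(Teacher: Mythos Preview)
The paper does not prove this statement at all: it is quoted as Remark~3.6 of Wehrheim~\cite{Wehrheim12} (following Franks~\cite{Franks79}, with further discussion in~\cite{Qin11}) and used as a black box to reduce to the Euclidean case. So there is no ``paper's own proof'' to compare against.

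Your sketch is a reasonable outline of how the result is actually established in those references---Morse Lemma to normalize $f$, local topological linearization of the flow near hyperbolic rest points, modification of the metric near the critical set, and then the delicate work of preserving the Morse--Smale condition and gluing the local conjugacies into a global homeomorphism. You correctly flag the two genuinely nontrivial steps (retaining transversality after the metric change, and the collar gluing needed to pass from local Hartman--Grobman conjugacies to a global $h$), and you already note that you would cite Wehrheim and Franks for the analytic details rather than reproduce them. That is exactly what the paper does, so your proposal is consistent with the paper's treatment; just be aware that in the context of this paper no proof is expected here, only the citation.
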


In particular, the number of $n$-part broken trajectories of $h \Psi_s h^{-1}$ is equal to the number of $n$-part broken trajectories of $\Psi_s$, so to prove the main theorem (Theorem~\ref{main-thm}) it suffices to consider the case where $(f, g)$ is Euclidean.  The goal of this section is the following lemma.

\begin{lemma}\label{morse-goal}
Let $(M, g)$ be a closed Riemannian manifold of dimension $n$, and suppose $f : (M, g) \rightarrow \mathbb{R}$ is Morse-Smale and Euclidean.  Let $\mathcal{S}$ be the partition into descending manifolds $\mathcal{D}(p)$.  Then $M$ is a CW-complex for which the generalized stratification by open cells is equal to $S$, and for each descending manifold $\mathcal{D}(p)$ of dimension $n$, the corresponding relative homology class $[\mathcal{D}(p)] \in H_n(M, M^{n-1})$ satisfies the bound
\[\Vert [\mathcal{D}(p)] \Vert_{\Delta, \ess}^{\mathcal{S}} \leq \#(n\text{-part broken trajectories beginning at }p).\]
\end{lemma}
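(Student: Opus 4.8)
\emph{The CW structure.}  The first assertion is the standard description of the Morse--Smale complex in the Euclidean case --- and this is exactly where the Euclidean hypothesis, supplied by Theorem~\ref{reduction} and the references given there, is needed.  Each compactified descending manifold $\overline{\mathcal{D}(p)}$ is the image of a continuous characteristic map $\Phi_p \colon (D^{\ind(p)}, S^{\ind(p)-1}) \rightarrow (M, M^{\ind(p)-1})$ restricting to a diffeomorphism of the open disk onto $\mathcal{D}(p)$, and these maps give $M$ a CW structure with $M^k = \bigcup_{\ind(p) \leq k} \overline{\mathcal{D}(p)}$.  Granting this, $\mathcal{S}$ is a generalized stratification: if $\mathcal{D}(q)$ limits to $\mathcal{D}(p)$ then $\mathcal{D}(p) \subseteq \overline{\mathcal{D}(q)} \setminus \mathcal{D}(q) \subseteq M^{\ind(q)-1}$, so $\ind(p) < \ind(q)$; hence the limiting relation refines the ordering of the cells by index, and in particular has no directed cycles (and $\mathcal{D}(p) \preceq \mathcal{D}(q)$ forces $\ind(p) \leq \ind(q)$, with equality only when $p = q$).

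\emph{The cycle.}  Now fix $p$ with $\ind(p) = n$.  Since $\Phi_p$ carries the fundamental class of $(D^n, S^{n-1})$ to $[\mathcal{D}(p)] \in H_n(M, M^{n-1})$, it suffices to exhibit one relative cycle $c$ representing $[\mathcal{D}(p)]$ that satisfies the \emph{cellular}, \emph{order}, \emph{internality}, and \emph{loop} conditions and has at most $N := \#(n\text{-part broken trajectories beginning at }p)$ essential $n$-simplices.  I would triangulate $D^n$ as the cone, from the center $0$ (which maps to $p \in \mathcal{D}(p)$), over a triangulation $\mathcal{T}'$ of $S^{n-1}$ chosen fine and subordinate to the partition of $S^{n-1}$ into the sets $\Phi_p^{-1}(\mathcal{D}(q))$, push this chain forward by $\Phi_p$, and take one barycentric subdivision.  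The resulting $c$ has all coefficients $\pm 1$, so $\Vert c \Vert_{\Delta, \ess}^{\mathcal{S}}$ is just the number of its essential simplices.  The push-forward of the cone chain is \emph{cellular} by construction (the interior of any face not containing $0$ lies over a single $\Phi_p^{-1}(\mathcal{D}(q))$, and the interior of any face containing $0$ lies inside the open disk, hence over $\mathcal{D}(p)$), so after the subdivision $c$ satisfies all four conditions by Sublemma~\ref{bary}; for this one needs only that a subordinate triangulation $\mathcal{T}'$ exists and that incident pieces of the partition correspond to $\preceq$-comparable cells, both of which follow from the Morse-theoretic compactness theorems that present $\overline{\mathcal{D}(p)}$ as a manifold with corners whose codimension-$k$ corner strata are indexed by $k$-fold broken trajectories out of $p$.

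\emph{Counting essential simplices.}  This is the step I expect to require real work.  Unwinding the definitions, an essential $n$-simplex of $c$ comes from a complete flag $f_0 \subset f_1 \subset \cdots \subset f_{n-1}$ in an $(n-1)$-simplex $\tau$ of $\mathcal{T}'$, capped by the $n$-simplex on $\tau$ and the center, such that the cells carrying $f_0, \dots, f_{n-1}$ and $\mathcal{D}(p)$ are pairwise distinct --- and since the cells along the flag are $\preceq$-monotone, this forces them to use every index $0, 1, \dots, n$ exactly once, with the bottom vertex lying where cells of all of the indices $0, \dots, n-1$ accumulate, i.e.\ at a $0$-dimensional corner stratum of $\overline{\mathcal{D}(p)}$.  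These deepest corners are in natural bijection with the $n$-part broken trajectories beginning at $p$, and the increasing flag of cells realized at such a corner is unique: it is the sequence of critical points of the corresponding broken trajectory.  In the standard corner chart $[0,1)^n$ about such a point the cell containing $\Phi_p(t)$ is governed by the position of the last vanishing coordinate among $t_1, \dots, t_n$, and from this explicit model one checks that a fine subordinate $\mathcal{T}'$ can be arranged so that each deepest corner is hit by at most one such flag-and-cone configuration.  Then $c$ has at most $N$ essential simplices, which is the asserted bound.  The genuinely delicate ingredients are the manifold-with-corners description of $\overline{\mathcal{D}(p)}$ compatible with both $\Phi_p$ and the cell partition --- needed for the \emph{order} condition and for localizing the essential simplices --- and the local combinatorics near a deepest corner; both rest on the compactness theorems of Morse theory together with the Euclidean normal form.
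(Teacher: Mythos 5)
Your overall strategy — triangulate $\overline{\mathcal{D}(p)}$, push forward, barycentrically subdivide, count essential simplices, and identify these with corner points of $\overline{\mathcal{D}(p)}$ that in turn biject with $n$-part broken trajectories — is the same as the paper's. Your observation that for an essential simplex the bottom vertex $f_0$ is forced to lie at a codimension-$n$ corner of $\overline{\mathcal{D}(p)}$ (because it must lie in the closures of preimages of cells of every index, and these closures are disjoint unions of closed codimension-$1$ faces, so the manifold-with-faces property forces codimension $n$) is a correct and nice way to see it.

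However, there is a genuine gap at exactly the step you flag as "the step I expect to require real work." You choose the triangulation $\mathcal{T}'$ of $S^{n-1}$ to be subordinate to the partition by $\Phi_p^{-1}(\mathcal{D}(q))$ — that is, to the pulled-back cell stratification $\mathcal{S}_2$ — and then assert that a fine such $\mathcal{T}'$ "can be arranged so that each deepest corner is hit by at most one such flag-and-cone configuration." This is not justified, and I do not think subordination to $\mathcal{S}_2$ alone implies it. The cell stratification $\mathcal{S}_2$ is strictly coarser than the manifold-with-corners stratification $\mathcal{S}_1$ of $\overline{\mathcal{D}(p)}$: for instance, for an index-$0$ critical point $c$, the $\mathcal{S}_2$-stratum $e^{-1}(\mathcal{D}(c)) = \overline{\mathcal{M}(p,c)}\times\{c\}$ is $(n-1)$-dimensional and contains all the deepest corners in its interior as a subset, so a face of an $\mathcal{S}_2$-subordinate $\mathcal{T}'$ may cut across several $\mathcal{S}_1$-strata. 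With such a triangulation, nothing rules out two distinct flags $v = f_0 \subset f_1 \subset \cdots \subset \tau$ and $v = f_0 \subset f_1' \subset \cdots \subset \tau'$ through the same corner $v$ whose faces realize the same chain of cells; making the triangulation finer does not obviously eliminate (and could in principle increase) such multiplicities. What is needed is a triangulation subordinate to the finer stratification $\mathcal{S}_1$, together with an inductive counting argument near a corner; this is precisely the content of Lemmas~\ref{corner}, \ref{faces}, and~\ref{gray} in the paper, which establish that after one barycentric subdivision of an $\mathcal{S}_1$-subordinate triangulation there is exactly one essential simplex per totally ordered chain of $n+1$ $\mathcal{S}_1$-strata, and that such chains hitting $n+1$ distinct $\mathcal{S}_2$-strata biject with $n$-part broken trajectories from $p$. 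Your argument implicitly needs both of these facts but replaces them with the phrase "from this explicit model one checks." To close the gap you should either (a) upgrade $\mathcal{T}'$ to be subordinate to $\mathcal{S}_1$ (for which you need to invoke a stratified triangulation theorem for the manifold-with-corners $\overline{\mathcal{D}(p)}$, as the paper does via Goresky's stratified triangulations of Whitney stratified spaces) and then actually carry out the local combinatorics at a corner by induction, or (b) give a different argument that bounds the number of essential flags through a given corner by one. The distinction between the two stratifications $\mathcal{S}_1$ and $\mathcal{S}_2$ and the translation between them is the substantive content of the lemma, and it is missing from your proposal.
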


We use Lizhen Qin's exposition~\cite{Qin10} as the reference for the Morse theory results; it presents self-contained proofs for a collection of Morse theory folk theorems.  In order to make $M$ into a CW-complex, the goal is to construct spaces $\overline{\mathcal{D}(p)}$ each homeomorphic to a closed ball with interior $\mathcal{D}(p)$, and maps $\overline{\mathcal{D}(p)} \rightarrow M$ for which the restriction to $\mathcal{D}(p)$ is the inclusion.  In the following discussion we fix notation necessary for describing the structure of $\overline{\mathcal{D}(p)}$.

For any two critical points $p$ and $q$, there is a smooth manifold $\mathcal{M}(p, q)$ which is the set of all unparametrized flow lines from $p$ to $q$.  Let $I = \{r_0, r_1, \ldots, r_{k+1}\}$ be a list of critical points, with 
\[\ind(r_0) > \ind(r_1) > \cdots > \ind(r_{k+1}).\]
The product manifolds $\mathcal{M}_I$ and $\mathcal{D}_I$ are defined by
\[\mathcal{M}_I = \prod_{i = 0}^k \mathcal{M}(r_i, r_{i+1}),\ \ \ \ \mathcal{D}_I = \mathcal{M}_I \times \mathcal{D}(r_{k+1}).\]
The length of $I$ is defined to be $\abs{I} = k$, which is two less than the number of critical points in $I$.  Then the spaces $\mathcal{M}(p, q)$ and $\mathcal{D}(p)$ can be compactified as
\[\overline{\mathcal{M}(p, q)} = \bigsqcup_{I} \mathcal{M}_I,\ \ \ \ \overline{\mathcal{D}(p)} = \bigsqcup_I \mathcal{D}_I,\]
where in $\overline{\mathcal{M}(p, q)}$ each $I$ is required to begin with $p$ and end with $q$, and in $\overline{\mathcal{D}(p)}$ each $I$ is required to begin with $p$.  Figure~\ref{morse-disk} depicts $\overline{\mathcal{D}(p)}$ in an example where $p$ is an index-$2$ critical point of a Morse-Smale function on $S^2$.

\begin{figure}
\begin{center}
\begin{tikzpicture}[scale = .4, vert/.style={circle, draw=black, fill=black, inner sep = 0pt, minimum size = 1mm}, >=stealth]
  \draw[thick] (8, 0) arc (0:-180:8 and 7);
  \draw[thick] (8, 0) arc (0:180:3 and 2);
  \draw[thick, ->] (2, 0) arc (0:-45:2 and 2);
  \draw[thick] (0, -2) arc (-90:-45:2 and 2);
  \draw[thick] (0, -2) arc (-90:-135:2 and 2);
  \draw[thick, ->] (-2, 0) arc (180:225:2 and 2);
  \draw[thick] (-2, 0) arc (0:180:3 and 3);

  \draw[dashed] (8, 0) arc (0:180:3 and 1);
  \draw (8, 0) arc (0:-180:3 and 1);
  \draw[dashed] (-2, 0) arc (0:180:3 and 1);
  \draw (-2, 0) arc (0:-180:3 and 1);

  \draw[->] (0, -2) arc (90:0:1 and 2.5);
  \draw (0, -7) arc (-90:0:1 and 2.5);
  \draw[dashed,->] (0, -2) arc (90:180:1 and 2.5);
  \draw[dashed] (0, -7) arc (270:180:1 and 2.5);

  \draw[->] (-5, 0) arc (180:240:5 and 7);

  \node[vert] at (-5, 3) [label=above: $p$] {};
  \node[vert] at (5, 2) [label=above: $a$] {};
  \node[vert] at (0, -2) [label=above: $b$] {};
  \node[vert] at (0, -7) [label=below: $c$] {};

  \draw[->] (10, -7)--(10,2);
  \node at (10, 3) {$f$};
\end{tikzpicture}\\ \vspace{20pt}
\begin{tikzpicture}[scale = .4, vert/.style={circle, draw=black, fill=black, inner sep = 0pt, minimum size = 1mm}, auto, >=stealth]
  \draw[thick] (0, 0) circle (4);

  \node[vert] at (0, 4) [label=above: ${\mathcal{M}(p, b)}\times {\mathcal{M}(b, c)} \times \mathcal{D}(c)$] {};
  \node[vert] at (0, -4) [label=below: ${\mathcal{M}(p, b)}\times {\mathcal{M}(b, c)} \times \mathcal{D}(c)$] {};
  \node at (4, 0) [label=right: ${\mathcal{M}(p, b)} \times \mathcal{D}(b)$] {};
  \node at (-4, 0) [label=left: ${\mathcal{M}(p, c)} \times \mathcal{D}(c)$] {};
  \node at (0, 0) {$\mathcal{D}(p)$};
\end{tikzpicture}
\end{center}
\caption{In this example the vertical coordinate gives a Morse-Smale function on $S^2$, shown in the upper picture.  The compactified disk $\overline{\mathcal{D}(p)}$, shown in the lower picture, consists of the following parts: the open $2$-cell $\mathcal{D}(p)$, the open segments $\mathcal{M}(p, b) \times \mathcal{D}(b)$ and $\mathcal{M}(p, c) \times \mathcal{D}(c)$, and the pair of points $\mathcal{M}(p, b) \times \mathcal{M}(b, c) \times \mathcal{D}(c)$.  Under the evaluation map $e : \overline{\mathcal{D}(p)} \rightarrow S^2$, the entire closed segment $\overline{\mathcal{M}(p, c)} \times \mathcal{D}(c)$, shown as the left-hand boundary of the disk, collapses to the point $c$.}\label{morse-disk}
\end{figure}
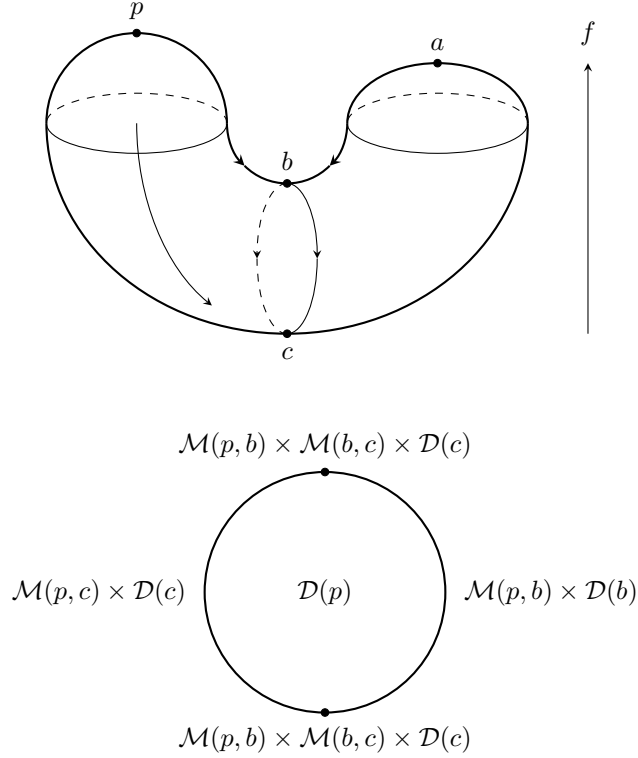

The theorems that describe the topology of these compactifications show that they are smooth manifolds with corners; a \textbf{\emph{smooth manifold with corners}} of dimension $n$ is defined by an atlas of open subsets of $[0, \infty)^n$ with smooth diffeomorphisms as transition functions.  The set of points that have exactly $k$ coordinates equal to zero (in any chart) is a manifold of dimension $n-k$, and we refer to it as the \textbf{\emph{codimension-$k$ part}}.  A \textbf{\emph{smooth manifold with faces}} is a smooth manifold with corners such that for every $k$, every point of the codimension-$k$ part belongs to the closures of $k$ different connected components of the codimension-$1$ part.

The description of $\overline{\mathcal{D}(p)}$ relies on the description of $\overline{\mathcal{M}(p, q)}$.

\begin{theorem}[Theorem 3.3 of~\cite{Qin10}]\label{3.3}
Let $(M, g)$ be a closed Riemannian manifold and suppose $f : (M, g) \rightarrow \mathbb{R}$ is Morse-Smale and Euclidean.  Then for every pair of critical points $(p, q)$, there is a smooth structure on $\overline{\mathcal{M}(p, q)}$ which satisfies the following properties:
\begin{itemize}
\item It is a compact smooth manifold with faces, and the codimension-$k$ part is $\bigsqcup_{\abs{I} = k}\mathcal{M}_I$.
\item The smooth structure is compatible with that of each $\mathcal{M}_I$.
\item For any critical point $r$ with $\ind(p) > \ind(r) > \ind(q)$, the concatenation map $\overline{\mathcal{M}(p, r)} \times \overline{\mathcal{M}(r, q)} \rightarrow \overline{\mathcal{M}(p, q)}$ is a smooth embedding.
\end{itemize}
\end{theorem}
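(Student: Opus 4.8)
This statement is Theorem~3.3 of~\cite{Qin10}, so the plan is to describe how one would establish it---the classical gluing construction in Morse theory---rather than to carry out every estimate; the Euclidean hypothesis is precisely what keeps those estimates clean. First I would fix the local model at each critical point: the Euclidean condition makes the negative gradient flow exactly linear, $\dot x_1 = x_1$, $\dot x_2 = -x_2$ in coordinates $(x_1,x_2) \in \mathbb{R}^{\ind(p)} \times \mathbb{R}^{n-\ind(p)}$, so a trajectory entering a small ball around a critical point is governed verbatim by the linear flow, with explicit exponential rates of convergence to and divergence from the stable and unstable coordinate subspaces. Together with Morse--Smale transversality---which makes $\mathcal{D}(p) \cap \mathcal{A}(q)$ a smooth submanifold of dimension $\ind(p) - \ind(q)$ and hence $\mathcal{M}(p,q)$, its quotient by the free flow action, a smooth manifold of dimension $\ind(p) - \ind(q) - 1$---and the standard compactness argument (any sequence of unparametrized flow lines from $p$ to $q$ subconverges, in the Hausdorff topology on trajectory segments, to a trajectory broken along some chain $I$, via the Arzel\`a--Ascoli theorem applied in the complement of small balls around the critical points), this exhibits $\overline{\mathcal{M}(p,q)} = \bigsqcup_I \mathcal{M}_I$ as a compact space stratified by the $\mathcal{M}_I$.

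The heart of the argument is gluing. Given a once-broken trajectory $(\gamma_1,\gamma_2) \in \mathcal{M}(p,r) \times \mathcal{M}(r,q)$, I would splice $\gamma_1$ and $\gamma_2$ together inside the linear chart at $r$ to form an approximate flow line, with a parameter $\rho$ recording how long the spliced trajectory lingers near $r$, and then correct this approximate solution to a genuine trajectory by a contraction-mapping argument whose error bounds are uniform because the exponential convergence supplied by the Euclidean model is exact. This would produce a gluing map $\mathcal{M}(p,r) \times \mathcal{M}(r,q) \times [0,\varepsilon) \to \overline{\mathcal{M}(p,q)}$ that is a homeomorphism onto a neighborhood of the stratum $\mathcal{M}(p,r) \times \mathcal{M}(r,q)$ and restricts on $\{\rho = 0\}$ to the inclusion of that stratum---surjectivity near the stratum coming from the compactness statement and injectivity from uniqueness in the fixed-point construction. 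Iterating for a chain $I$ with $\abs{I} = k$ gives a chart $\mathcal{M}_I \times [0,\varepsilon)^k \to \overline{\mathcal{M}(p,q)}$, which identifies the codimension-$k$ part with $\bigsqcup_{\abs{I}=k}\mathcal{M}_I$; the independence of the $k$ gluing parameters is exactly the \emph{manifold with faces} property, that each codimension-$k$ point lies on the closures of $k$ distinct codimension-$1$ faces.

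The hard part will be checking that these charts are $C^\infty$-compatible---with one another and with the given smooth structures on the $\mathcal{M}_I$---rather than merely continuous. Compatibility with the $\mathcal{M}_I$ charts requires the glued trajectory to depend smoothly on $(\gamma_1,\gamma_2)$ and to extend smoothly in $\rho$ up to $\rho = 0$, which means the gluing estimates must be pushed to all orders in the parameters; this is the technical core and is exactly where one uses that the flow near the critical points is the \emph{exact} linear model and not just $C^1$-close to it. Associativity of iterated gluing (gluing at $r$ then at $r'$ versus the reverse) then gives the compatibility of the higher-codimension charts, and the same associativity shows the concatenation map $\overline{\mathcal{M}(p,r)} \times \overline{\mathcal{M}(r,q)} \to \overline{\mathcal{M}(p,q)}$---which on open strata is simply the inclusion of the trajectories broken at $r$---is smooth across all lower strata; it is an embedding because distinct broken configurations lie in distinct strata indexed by distinct chains $I$. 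The detailed estimates behind all of this are carried out in~\cite{Qin10}, from which we import the theorem as stated.
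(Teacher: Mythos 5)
The paper gives no proof of this statement at all: it is imported verbatim as Theorem 3.3 of Qin's paper, exactly as you acknowledge in your final sentence, so there is no argument in the paper to compare yours against. Your sketch of the standard gluing construction (linear local model from the Euclidean hypothesis, pregluing plus contraction mapping, smoothness of charts to all orders, associativity of iterated gluing) is a reasonable outline of how such a theorem is proved, and since both you and the paper ultimately defer to Qin for the substance, the treatment is essentially the same.
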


\begin{theorem}[Theorem 3.4 of~\cite{Qin10}]\label{3.4}
Let $(M, g)$ be a closed Riemannian manifold and suppose $f : (M, g) \rightarrow \mathbb{R}$ is Morse-Smale and Euclidean.  Then for every critical point $p$, there is a smooth structure on $\overline{\mathcal{D}(p)}$ which satisfies the following properties:
\begin{itemize}
\item It is a compact smooth manifold with faces, and the codimension-$k$ part is $\bigsqcup_{\abs{I} = k-1}\mathcal{D}_I$.
\item The smooth structure is compatible with that of each $\mathcal{D}_I$.
\item For any critical point $r$ with $\ind(p) > \ind(r)$, the concatenation map $\overline{\mathcal{M}(p, r)} \times \overline{\mathcal{D}(r)} \rightarrow \overline{\mathcal{D}(p)}$ is a smooth embedding.
\item The evaluation map $e : \overline{\mathcal{D}(p)} \rightarrow M$ is smooth, where the restriction of $e$ on $\mathcal{D}_I = \mathcal{M}_I \times \mathcal{D}(r_{k+1})$ is the coordinate projection onto $\mathcal{D}(r_{k+1}) \subseteq M$.
\end{itemize}
\end{theorem}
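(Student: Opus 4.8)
The plan is to build the Morse--Smale CW structure directly from the compactified descending disks $\overline{\mathcal{D}(p)}$ of Theorem~\ref{3.4}, and then, for each top-dimensional $\mathcal{D}(p)$, to write down one carefully chosen representative of the relative class $[\mathcal{D}(p)]$ whose essential simplices are in bijection with the $n$-part broken trajectories beginning at $p$. For the first assertion: Theorem~\ref{3.4} makes each $\overline{\mathcal{D}(p)}$ a compact smooth manifold with faces of dimension $\ind(p)$ with interior $\mathcal{D}(p)$, and the classical fact (proved in~\cite{Qin10}) that $\overline{\mathcal{D}(p)}$ is moreover homeomorphic to a closed $\ind(p)$-ball lets the evaluation maps $e : \overline{\mathcal{D}(p)} \to M$ serve as characteristic maps of a finite CW structure. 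Since on a stratum $\mathcal{D}_I$ of codimension $k$ the map $e$ is the projection onto the last factor $\mathcal{D}(r)$, where $r$ is the lowest-index critical point of $I$ and $\ind(r) \le \ind(p) - k$, the image $e(\partial\overline{\mathcal{D}(p)})$ lies in $\bigcup_{\ind(r)\le\ind(p)-1}\mathcal{D}(r)$; hence the $i$-skeleton is $M^i = \bigcup_{\ind(r)\le i}\mathcal{D}(r)$ and the partition by open cells is exactly $\mathcal{S}$. Finally $\mathcal{S}$ is a generalized stratification: a flow line always runs from a higher-index critical point to a lower one (as recalled in the discussion of the Morse--Smale condition), so $\mathcal{D}(r')\to\mathcal{D}(r)$ forces $\ind(r')>\ind(r)$, which precludes directed cycles.

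Now fix $p$ with $\ind(p)=n$, and let $Q_p$ be the set of $n$-part broken trajectories beginning at $p$. By Theorem~\ref{3.4} the codimension-$n$ part of $\overline{\mathcal{D}(p)}$ is $\bigsqcup_{\abs{I}=n-1}\mathcal{D}_I$; each such list $I=\{r_0=p,r_1,\dots,r_n\}$ has $\ind(r_i)=n-i$ and $\mathcal{D}(r_n)$ is a point, so this codimension-$n$ part is precisely the finite set $Q_p$. I would pick a triangulation $T$ of the manifold-with-faces $\overline{\mathcal{D}(p)}$ that (i) is compatible with the partition into the strata $\mathcal{D}_I$, so that every open simplex of $T$ lies in a single stratum; and (ii) near each corner $q\in Q_p$, in a chart sending a neighborhood of $q$ to a neighborhood of $0$ in $[0,\infty)^n$ with the $n$ codimension-$1$ faces at $q$ going to the coordinate hyperplanes, restricts to the Kuhn triangulation of a small cube $[0,\varepsilon_q]^n$, whose top simplices are $\Delta_\pi=\{\,0\le x_{\pi(1)}\le\cdots\le x_{\pi(n)}\le\varepsilon_q\,\}$ for permutations $\pi$. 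Such a $T$ exists by the standard theory of triangulating smooth manifolds with corners applied to the stratified structure supplied by Theorems~\ref{3.3} and~\ref{3.4}. I then set $c=e_*(T)$, treating $T$ as a fundamental chain, with all coefficients $\pm1$, of $\overline{\mathcal{D}(p)}$ relative to $\partial\overline{\mathcal{D}(p)}$; this $c$ is a relative $(M,M^{n-1})$-cycle representing $[\mathcal{D}(p)]\in H_n(M,M^{n-1})$, and by (i) and the projection form of $e$ it satisfies the \emph{cellular} condition. Let $c_{\rel}$ be the first barycentric subdivision of $c$, which by Sublemma~\ref{bary} satisfies the \emph{cellular}, \emph{order}, \emph{internality}, and \emph{loop} conditions and still represents $[\mathcal{D}(p)]$.

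The heart of the argument is then to count the essential simplices of $c_{\rel}$, all of which carry coefficient $\pm1$. Each top simplex of $c_{\rel}$ is $e$ applied to a barycentric simplex $\langle\hat g_0,\dots,\hat g_n\rangle$ built from a flag of faces $g_0\subsetneq g_1\subsetneq\cdots\subsetneq g_n=\tau$ inside some $\tau\in T$; its vertices map into the cells that $e$ assigns to the strata of the interiors of the $g_i$. Those strata are nested in $\overline{\mathcal{D}(p)}$ (smaller faces lie in deeper strata), and $e$ sends a deeper stratum to a cell of no larger index, so the cells form a chain of weakly increasing index along the flag; the simplex is essential exactly when this chain realizes all $n+1$ indices $0,1,\dots,n$. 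In that case the deepest stratum, that of $g_0$, maps to an index-$0$ cell; since the list of critical points indexing a stratum has strictly decreasing indices, that list must then contain one critical point of every index $0,\dots,n$, so the stratum has codimension $n$, i.e.\ it is a corner $q$ and $g_0=\{q\}$, whence $\tau$ is one of the Kuhn simplices $\Delta_\pi$ at $q$. A direct computation with the staircase triangulation shows that the stratum of the interior of a face of $\Delta_\pi$ spanned by a subset of its vertices $w_0=0,w_1,\dots,w_n$ depends only on the largest vertex index $\mu$ occurring and is carried by $e$ to the cell of index $n-\max\{\pi(1),\dots,\pi(n-\mu)\}$; hence the indices along a flag strictly increase, making the barycentric simplex essential, precisely when $\pi$ is the identity and the flag is $\{w_0\}\subsetneq\langle w_0,w_1\rangle\subsetneq\cdots\subsetneq\Delta_{\mathrm{id}}$. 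So exactly one essential simplex of $c_{\rel}$ sits at each corner $q\in Q_p$, and therefore $\Vert[\mathcal{D}(p)]\Vert_{\Delta,\ess}^{\mathcal{S}}\le\Vert c_{\rel}\Vert_{\Delta,\ess}^{\mathcal{S}}=\abs{Q_p}=\#(n\text{-part broken trajectories beginning at }p)$.

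The main obstacle is this last step: tracking which barycentric simplices are essential requires precisely matching the combinatorics of the Kuhn triangulation to the stratification of $\overline{\mathcal{D}(p)}$ by the products $\mathcal{D}_I$, and in particular establishing both that no essential simplex can occur away from the corners and that the staircase triangulation contributes exactly one at each corner. A secondary technical point is the construction of $T$ with the prescribed local model near the finitely many corners; this is routine once one has the smooth stratified description of $\overline{\mathcal{D}(p)}$ from Theorems~\ref{3.3} and~\ref{3.4}.
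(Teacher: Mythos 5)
Your proposal does not prove the statement in question. Theorem~\ref{3.4} asserts the existence of a smooth manifold-with-faces structure on $\overline{\mathcal{D}(p)}$, with the codimension-$k$ part identified as $\bigsqcup_{\abs{I}=k-1}\mathcal{D}_I$, compatibility with the smooth structures on the pieces $\mathcal{D}_I$, smoothness of the concatenation embeddings $\overline{\mathcal{M}(p,r)}\times\overline{\mathcal{D}(r)}\rightarrow\overline{\mathcal{D}(p)}$, and smoothness of the evaluation map. Your argument takes exactly these facts as inputs---you write ``By Theorem~\ref{3.4} the codimension-$n$ part of $\overline{\mathcal{D}(p)}$ is $\bigsqcup_{\abs{I}=n-1}\mathcal{D}_I$'' and you appeal to ``the smooth stratified description of $\overline{\mathcal{D}(p)}$ from Theorems~\ref{3.3} and~\ref{3.4}''---and then deduces something else entirely, namely the CW structure on $M$ and the bound $\Vert[\mathcal{D}(p)]\Vert_{\Delta,\ess}^{\mathcal{S}}\leq\#(n\text{-part broken trajectories beginning at }p)$. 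That is the content of Lemma~\ref{morse-goal}, not of Theorem~\ref{3.4}. As a proof of Theorem~\ref{3.4} the argument is circular, and it contains no construction of the smooth structure at all. (The paper itself does not reprove this theorem; it is quoted from Qin~\cite{Qin10}.)

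What a proof of Theorem~\ref{3.4} actually requires is the analytic core of the compactness-and-gluing package for Morse--Smale flows: one must topologize $\overline{\mathcal{D}(p)}=\bigsqcup_I\mathcal{D}_I$ so that convergence means convergence to broken trajectories, produce corner charts near each stratum $\mathcal{M}_I\times\mathcal{D}(r_{k+1})$ by a gluing construction (this is where the Euclidean hypothesis is used, to obtain explicit local models of the flow near each critical point and to control the gluing parameters), and then verify compactness, the manifold-with-faces condition, and the compatibility and smoothness claims. None of this appears in your proposal. As a secondary remark: the counting argument you do give is a reasonable alternative route to Lemma~\ref{morse-goal} (a Kuhn-triangulation local model at the corners in place of the paper's barycentric induction), but its key combinatorial claim---that an essential barycentric simplex must sit at a corner and that each corner contributes exactly one---is precisely what the paper proves in Lemmas~\ref{corner}, \ref{faces}, and~\ref{gray}, and in your write-up it is asserted rather than established.
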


To make $M$ into a CW-complex, what remains is to show that $\overline{\mathcal{D}(p)}$ with this topology is homeomorphic to a closed ball.

\begin{theorem}[Theorem 3.7 of~\cite{Qin10}]\label{ball}
Let $(M, g)$ be a closed Riemannian manifold and suppose $f : (M, g) \rightarrow \mathbb{R}$ is Morse-Smale and Euclidean.  Let $D^k$ denote the closed ball of dimension $k$, and let $S^{k-1}$ denote its boundary.  Then for every critical point $p$ there is a homeomorphism between the pair $(\overline{\mathcal{D}(p)}, \del \overline{\mathcal{D}(p)})$ and the pair $(D^{\ind(p)}, S^{\ind(p) - 1})$.
\end{theorem}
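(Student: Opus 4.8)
The plan is to prove Theorem~\ref{ball} by induction on $k := \ind(p)$, feeding in the smooth structure on $\overline{\mathcal{D}(p)}$ supplied by Theorems~\ref{3.3} and~\ref{3.4}. The cases $k = 0$ and $k = 1$ are immediate, since then $\overline{\mathcal{D}(p)}$ is a point or a closed arc. In the inductive step I would argue in two stages: first that the union $\del\overline{\mathcal{D}(p)}$ of the positive-codimension faces is homeomorphic to $S^{k-1}$, and then that coning this boundary off along the (broken) negative gradient flow produces the desired homeomorphism of pairs $(\overline{\mathcal{D}(p)}, \del\overline{\mathcal{D}(p)}) \cong (D^k, S^{k-1})$.

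For the first stage, recall from Theorem~\ref{3.4} that the codimension-one faces of $\overline{\mathcal{D}(p)}$ are the products $\mathcal{M}(p,r) \times \mathcal{D}(r)$ over critical points $r$ with $\ind(r) < k$ and $\mathcal{M}(p,r) \neq \emptyset$, and that the concatenation maps embed $\overline{\mathcal{M}(p,r)} \times \overline{\mathcal{D}(r)}$ as the closure of that face. By the inductive hypothesis each $\overline{\mathcal{D}(r)}$ is a closed ball, and Theorem~\ref{3.3} describes each moduli compactification $\overline{\mathcal{M}(p,r)}$ as a compact manifold with faces; together these exhibit $\del\overline{\mathcal{D}(p)}$ as a finite union of closed pieces glued along lower faces. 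Since $\overline{\mathcal{D}(p)}$ is a compact manifold with faces, $\del\overline{\mathcal{D}(p)}$ is a closed $(k-1)$-manifold; I would then identify it with $S^{k-1}$ using that $\overline{\mathcal{D}(p)}$ is compact and connected with interior $\mathcal{D}(p)$ diffeomorphic to $\mathbb{R}^{k}$, so that the end it compactifies is the standard one.

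For the second stage I would use a vector field $\xi$ on $\overline{\mathcal{D}(p)}$ that restricts on $\mathcal{D}(p)$ to a complete reparametrization of $-\nabla f$, is tangent to every face and to every corner stratum, and points strictly out of the coordinate disk $B_\varepsilon = \{ x_2 = 0,\ \abs{x_1} \leq \varepsilon \}$ around $p$ (in the Euclidean chart at $p$) along $\del B_\varepsilon \cong S^{k-1}$. Such a $\xi$ can be assembled from local models on the manifold with faces: gradient-like fields tangent to each stratum exist locally, and both tangency to the strata and the outward sign condition survive convex combinations, so a partition of unity subordinate to the stratification patches them; the compatibility of the smooth structures with the product decompositions $\mathcal{D}_I$ (Theorem~\ref{3.4}) is what lets $-\nabla f$ be extended in a controlled way. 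Running the forward flow of $\xi$ from $\del B_\varepsilon$ should then sweep out $\overline{\mathcal{D}(p)} \setminus \operatorname{int} B_\varepsilon$ with every trajectory limiting onto $\del\overline{\mathcal{D}(p)}$; rescaling flow time to $(0,1]$ gives a homeomorphism $\overline{\mathcal{D}(p)} \setminus \operatorname{int} B_\varepsilon \cong S^{k-1} \times [0,1]$ taking $S^{k-1} \times \{1\}$ onto $\del\overline{\mathcal{D}(p)}$, and gluing back $B_\varepsilon \cong D^{k}$ along $\del B_\varepsilon$ completes the induction.

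I expect the main obstacle to be the interface between the two stages: one must show, as part of the same induction, both that the face closures $\overline{\mathcal{M}(p,r)} \times \overline{\mathcal{D}(r)}$ really do assemble into a sphere and that the forward flow of $\xi$ limits onto $\del\overline{\mathcal{D}(p)}$ via a homeomorphism rather than merely a continuous surjection---in particular ruling out trajectories that stall at the intermediate critical sets inside the faces $\mathcal{D}_I$ (where the extended field vanishes) and checking that distinct trajectories have distinct limits. This compactness-and-gluing analysis is exactly what Theorems~\ref{3.3} and~\ref{3.4} are built to support, which is why a careful treatment such as the one in~\cite{Qin10} is required; granting it, the remaining steps are a routine application of Morse theory for manifolds with faces.
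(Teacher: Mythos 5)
You should know at the outset that the paper gives no proof of this statement: Theorem~\ref{ball} is imported verbatim as Theorem 3.7 of~\cite{Qin10}, so there is no internal argument to compare yours against. That is itself informative, because this is exactly the delicate ``descending manifolds give a CW structure'' folk theorem, whose complete proofs (Qin's included) are long; your two-stage sketch outlines the standard strategy but leaves both of its load-bearing steps unproved.

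In stage one, identifying $\del\overline{\mathcal{D}(p)}$ with $S^{k-1}$ ``because the end it compactifies is the standard one'' is not an argument. From the fact that $\overline{\mathcal{D}(p)}$ is a compact manifold with boundary whose interior $\mathcal{D}(p)$ is diffeomorphic to $\mathbb{R}^{k}$, one can conclude that $\del\overline{\mathcal{D}(p)}$ is a homotopy $(k-1)$-sphere and $\overline{\mathcal{D}(p)}$ is contractible; upgrading that to a homeomorphism of pairs requires the topological Poincar\'e conjecture and h-cobordism theorem in all dimensions --- and if you are willing to invoke those, the theorem follows from Theorem~\ref{3.4} in one line and your induction and flow construction become superfluous. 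The hands-on alternative, assembling the face closures into a sphere, is not a formal consequence of the inductive hypothesis either: $\overline{\mathcal{M}(p,r)}$ is only a compact manifold with faces, not a ball, so the pieces $\overline{\mathcal{M}(p,r)} \times \overline{\mathcal{D}(r)}$ are not cells, and controlling how they are identified along corners is a genuine argument you have not supplied. In stage two, a field tangent to every face never carries $\del B_\varepsilon$ onto $\del\overline{\mathcal{D}(p)}$ in finite time, so ``rescaling flow time to $(0,1]$'' presupposes that the time-infinity limit map exists, is continuous, and is bijective onto the boundary. That is precisely the hard analytic content --- continuity of the endpoint map at broken trajectories and injectivity of limits --- and Theorems~\ref{3.3} and~\ref{3.4} do not provide it: they describe the topology of the compactification, not the asymptotics of a flow in that topology. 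Moreover the natural Lyapunov function $f \circ e$ has critical loci along the strata $\mathcal{M}_I \times \{r_{k+1}\}$, so ``no stalling'' does not come for free from a partition-of-unity patching. Your closing paragraph correctly locates these difficulties, but deferring them to ``a careful treatment such as~\cite{Qin10}'' makes the proposal an outline of the known strategy rather than a proof.
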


In order to prove the main lemma of this section, Lemma~\ref{morse-goal}, we want to find a triangulation of each $n$-dimensional cell $\overline{\mathcal{D}(p)}$ that satisfies the \emph{cellular}, \emph{order}, \emph{internality}, and \emph{loop} properties and does not have too many essential simplices.  Let $\mathcal{S}_1$ denote the stratification of $\overline{\mathcal{D}(p)}$ consisting of the connected components of the codimension-$k$ part for each $k$.  We will first find a finite triangulation of $\overline{\mathcal{D}(p)}$ with the \emph{cellular} property with respect to $\mathcal{S}_1$; an iterated barycentric subdivision of this triangulation satisfies all four properties by Sublemma~\ref{bary}, and the next lemma counts the number of essential simplices with respect to $\mathcal{S}_1$.  Figure~\ref{triangulated-disk} depicts an example triangulation of the disk $\overline{\mathcal{D}(p)}$ from Figure~\ref{morse-disk}, labeled with its essential simplices.

\begin{lemma}\label{corner}
Let $X = [0, \infty)^n$, and let $\mathcal{S}_1$ be the stratification of $X$ consisting of the connected components of the codimension-$k$ part for each $k$.  Let $c_1$ be a finite triangulation of the closure of a neighborhood of $0$ in $X$, satisfying the \emph{cellular} property with respect to $\mathcal{S}_1$, and let $c_2$ denote the first barycentric subdivision of $c_1$.  Then the map sending each simplex of $c_2$ to the list of strata of its vertices induces a bijection
\[\text{essential simplices of }c_2 \leftrightarrow \text{totally ordered chains of }n+1\text{ strata}.\]
\end{lemma}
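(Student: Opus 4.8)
The plan is to analyze the combinatorial structure of the barycentric subdivision $c_2$ and match its essential simplices directly against chains of strata. First I would recall the basic structure of the stratification $\mathcal{S}_1$ of $X = [0,\infty)^n$: the stratum containing a point is determined by the set $T \subseteq \{1,\dots,n\}$ of coordinates that vanish there, so the strata are indexed by subsets $T$, each is connected (in fact convex), and $S_T \rightarrow S_{T'}$ (i.e.\ $S_{T'}$ limits to $S_T$) precisely when $T' \subsetneq T$. Thus the partial order on strata is reverse inclusion of the vanishing sets, and a totally ordered chain of $n+1$ strata is the same as a maximal flag $\emptyset = T_0 \subsetneq T_1 \subsetneq \cdots \subsetneq T_n = \{1,\dots,n\}$ of subsets — equivalently, an ordering of the coordinate hyperplanes. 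The key point about barycentric subdivision is that the vertices of $c_2$ are the barycenters of the faces of $c_1$, and by Sublemma~\ref{bary} every simplex of $c_2$ lies in a totally ordered chain of faces of $c_1$; combined with the \emph{cellular} property of $c_1$ with respect to $\mathcal{S}_1$, this means the strata of the vertices of any simplex of $c_2$ form a totally ordered chain (a face and a face in its boundary have strata related by $\rightarrow$).

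Next I would define the claimed map and check it lands where asserted. Given an essential simplex $\Delta$ of $c_2$ with vertices $v_0,\dots,v_k$, by definition ``essential'' means (since there is no partial coloring here other than by strata, and $X$ has no non-contractible loops) that all the $v_i$ lie in distinct strata; together with the previous paragraph, their strata form a totally ordered chain of $k+1$ distinct strata. Since the longest totally ordered chain of strata in $X$ has exactly $n+1$ elements (the maximal flags above), we get $k \le n$, and $\Delta$ maps to a totally ordered chain of $n+1$ strata only when $k = n$; so I must first argue that essential simplices of $c_2$ are automatically $n$-dimensional. This follows because $c_1$ triangulates (the closure of) a neighborhood of $0$, which is $n$-dimensional, so $c_1$ — hence $c_2$ — is a pure $n$-dimensional complex near $0$; but an arbitrary simplex need not be essential, so the real content is that a lower-dimensional simplex of $c_2$ whose vertices all lie in distinct strata simply cannot occur when the chain of strata it determines fails to be maximal. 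I would handle this by noting that the essential simplices of $c_2$ are exactly the faces of the form $(\hat{f_0}, \hat{f_1}, \dots, \hat{f_k})$ where $f_0 \subsetneq f_1 \subsetneq \cdots \subsetneq f_k$ is a chain of faces of $c_1$ whose strata are all distinct; and because each face $f$ of $c_1$ lies in a single stratum $S_{T(f)}$ with $T(f)$ nonincreasing along the chain, distinctness of strata forces the $T(f_i)$ to form a strictly decreasing chain of subsets, which has length at most $n+1$, with equality iff $k=n$ and the $T(f_i)$ realize a full flag.

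Then I would prove bijectivity. For surjectivity: given a maximal flag of strata $S_{T_0} \succ S_{T_1} \succ \cdots \succ S_{T_n}$ (with $T_0 = \{1,\dots,n\} \supsetneq \cdots \supsetneq T_n = \emptyset$), I need to exhibit an essential $n$-simplex of $c_2$ realizing it; this is where I expect the main obstacle, because it requires extracting from the arbitrary \emph{cellular} triangulation $c_1$ a chain of faces $f_0 \subsetneq \cdots \subsetneq f_n$ with $T(f_i) = T_i$. The idea is to work down the flag: the vertex $0$ of $X$ lies in the minimal stratum $S_{\{1,\dots,n\}}$, and since $c_1$ triangulates a neighborhood of $0$, the point $0$ is a vertex of $c_1$ (a cellular triangulation must have a vertex at $0$, since the stratum $\{0\}$ is a single point and any simplex meeting it in its interior must be that point); this gives $f_0 = \{0\}$. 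Inductively, having built $f_{i-1}$ inside the stratum $S_{T_{i-1}}$, I use that $c_1$ is a full-dimensional cellular triangulation near $0$: the link of $f_{i-1}$ must contain a vertex lying in the next stratum $S_{T_i}$ along the flag, because the flag is realized geometrically by a nearby generic point and the simplex of $c_1$ containing that point in its interior has faces hitting every stratum of the flag through $S_{T_{i-1}}$ and $S_{T_i}$. Spelling this out carefully — that a cellular triangulation of a neighborhood of $0$ in $[0,\infty)^n$ must contain, for each maximal flag, a chain of faces realizing it — is the technical heart of the argument. For injectivity: two essential $n$-simplices of $c_2$ with the same vertex-stratum list are built from chains of faces of $c_1$ with the same $T$-values; since barycenters of distinct faces of $c_1$ are distinct vertices of $c_2$ and an $n$-simplex of $c_2$ is determined by its $n+1$ vertices, it suffices to note that within a fixed simplex $\tau$ of $c_1$, there is at most one chain of faces with a prescribed sequence of strata — which holds because a face of $\tau$ is determined by its vertex set, the strata of the vertices of $\tau$ are determined, and a face lying in $S_T$ must be spanned by exactly those vertices of $\tau$ lying in strata $\succeq S_T$ that are forced by the chain condition; a short combinatorial check finishes this. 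I would close by remarking that the bijection is explicitly the stated ``list of strata of vertices'' map, so no further naturality statement is needed.
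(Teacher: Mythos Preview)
Your overall strategy---identify essential $n$-simplices of $c_2$ with maximal chains $f_0 \subsetneq \cdots \subsetneq f_n$ of faces of $c_1$ whose interior strata are pairwise distinct, and match these against maximal flags $T_0 \supsetneq \cdots \supsetneq T_n$---is sound, and the surjectivity sketch is plausible. The injectivity argument, however, has a genuine gap. You reduce to the claim that ``within a fixed simplex $\tau$ of $c_1$, there is at most one chain of faces with a prescribed sequence of strata,'' but nothing you have said forces two chains realizing the same flag to sit inside the \emph{same} top simplex $\tau = f_n$. A priori, two different $n$-simplices of $c_1$ could each contain a chain of faces with the prescribed strata, yielding two distinct essential simplices of $c_2$ mapping to the same flag; your ``short combinatorial check'' does not address this.

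The paper closes exactly this gap, and does so by a short induction on $n$ rather than a direct argument. The point is that an essential $n$-simplex of $c_2$ has a unique vertex in the interior stratum (the barycenter of some $f_n$), and its opposite $(n-1)$-face lies in $c_2 \cap \partial X$, inside one of the coordinate hyperplane faces of $[0,\infty)^n$. Conversely, any essential $(n-1)$-simplex of $c_2 \cap \partial X$ extends to \emph{exactly one} $n$-simplex of $c_2$, because its carrier $(n-1)$-face $f_{n-1}$ of $c_1$ lies on $\partial X$ and is therefore a face of a unique $n$-simplex of $c_1$. This unique-extension-from-the-boundary is precisely what supplies injectivity (and simultaneously surjectivity), reducing the count to the inductive hypothesis on each face. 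Your direct approach can be repaired by inserting this observation, but as written the injectivity step does not go through.
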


\begin{proof}
The proof is by induction on $n$.  The base case is $n = 0$, in which case $X$ is one point.  For $n > 0$, we apply the inductive hypothesis to each $(n-1)$-dimensional face of $X$ with the induced triangulation.  The resulting essential $(n-1)$-simplices of $c_2 \cap \del X$ are in bijection with the totally ordered chains of $n$ strata in $\del X$.  There is only one $n$-dimensional stratum, which we append to the totally ordered chains of $n$ strata in $\del X$ to get the totally ordered chains of $n+1$ strata in $X$.  An $n$-simplex of $X$ is essential if and only if it has one vertex in the interior of $X$ and the opposite face is an essential $(n-1)$-simplex of $\del X$; there is exactly one such $n$-simplex of $c_2$ for each $(n-1)$-simplex of $c_2 \cap \del X$.  Thus the essential $n$-simplices are in bijection with the totally ordered chains of $n+1$ strata in $X$.
\end{proof}

We use the following easy lemma to show that when counting totally ordered chains of $n+1$ strata, it is equivalent to count them globally on $\overline{\mathcal{D}(p)}$ or locally near the vertices.

\begin{lemma}\label{faces}
Let $D$ be an $n$-dimensional compact smooth manifold with faces.  Let $v$ be a vertex of the codimension-$n$ part, and let $U$ be a neighborhood of $v$ corresponding to a ball around $0$ in $[0, \infty)^n$.  Then for every $k$, the connected components of the codimension-$k$ part of $U$ all come from different connected components of the codimension-$k$ part of $D$.
\end{lemma}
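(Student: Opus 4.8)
The plan is to work entirely in the given corner chart $U$, identified with a ball $B$ about $0$ in $[0,\infty)^n$, and to match up the local strata of $U$ with the global strata of $D$ using the defining property of a manifold with faces. Inside $B$, the connected components of the codimension-$k$ part are indexed by the $k$-element subsets $S \subseteq \{1, \ldots, n\}$: the component $F_S$ is the set of points whose vanishing coordinates are exactly those in $S$, and $F_S$ is convex, hence connected. In particular the codimension-$1$ part of $U$ is a disjoint union $H_1 \sqcup \cdots \sqcup H_n$ with $H_i = F_{\{i\}}$, and since each $H_i$ is connected it lies in a single connected component $C_i$ of the codimension-$1$ part of $D$. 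Because $v$ lies in $\overline{H_i} \subseteq \overline{C_i}$ for every $i$, and because any codimension-$1$ component of $D$ whose closure contains $v$ must meet $U$ and hence meet some $H_i$, the components of the codimension-$1$ part of $D$ whose closure contains $v$ are exactly $C_1, \ldots, C_n$; the manifold-with-faces hypothesis says there are exactly $n$ of them, so the $C_i$ are pairwise distinct.

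Next I would attach to each connected component $C$ of the codimension-$k$ part of $D$ the set $\Phi(C)$ of connected components of the codimension-$1$ part of $D$ whose closure contains $C$, and show that $\Phi(C)$ is well defined with $\abs{\Phi(C)} = k$. The point is that the set of codimension-$1$ components ``adjacent to'' the codimension-$k$ part is locally constant along the codimension-$k$ part: in a corner chart around a point of the codimension-$k$ part whose vanishing coordinates are indexed by a $k$-set $T$, the local codimension-$k$ stratum is a connected piece of a coordinate plane, the local codimension-$1$ strata meeting its closure are the $k$ pieces $\{x_i = 0\}$ for $i \in T$, and each of these lies in a single global codimension-$1$ component; the faces condition forces exactly $k$ global components to be adjacent, so these are exactly those $k$, and they are distinct. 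This gives a well-defined $\Phi$ with $\abs{\Phi(C)} = k$.

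Then I would compute $\Phi$ on the components of $D$ that meet $U$. Applying the local analysis of the previous paragraph at a point of $F_S$, using the chart $U$ itself, shows that for the component of the codimension-$k$ part of $D$ containing $F_S$ one has $\Phi = \{C_i : i \in S\}$, because near such a point the codimension-$1$ part of $U$ consists of pieces of the $H_i$ with $i \in S$, and $H_i \subseteq C_i$. Since the $C_i$ are pairwise distinct, the assignment $S \mapsto \{C_i : i \in S\}$ is injective on $k$-element subsets; hence whenever $S \neq S'$ the components of the codimension-$k$ part of $D$ containing $F_S$ and $F_{S'}$ have different values of $\Phi$ and are therefore distinct. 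As $F_S$, with $S$ ranging over $k$-element subsets, exhausts the connected components of the codimension-$k$ part of $U$, this is exactly the statement of the lemma.

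The main obstacle is the middle step, namely verifying that $C \mapsto \Phi(C)$ is well defined: one must check that the collection of codimension-$1$ components adjacent to a given codimension-$k$ component does not change as the base point moves within that component. This is precisely where the ``manifold with faces'' hypothesis is used rather than just ``manifold with corners'', and it rests on the local normal form of the strata in corner charts together with the connectedness of the local strata $\{x_i = 0,\ x_j > 0 \text{ for } j \neq i\}$.
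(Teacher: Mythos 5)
Your proof is correct and takes essentially the same approach as the paper's: label each codimension-$k$ point (and hence each component, by local constancy) with the set of adjacent codimension-$1$ components of $D$, observe that the faces condition together with the chart at $v$ makes the $n$ codimension-$1$ components meeting $U$ pairwise distinct, and conclude that the local codimension-$k$ components receive distinct labels. You simply spell out in more detail the two facts the paper asserts without elaboration, namely that the labeling is well-defined (locally constant) and that it has exactly $k$ elements.
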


\begin{proof}
Every point $x$ in the codimension-$k$ part can be labeled by which $k$ connected components of the codimension-$1$ part of $D$ have closure containing $x$.  This labeling is constant as $x$ varies within a connected component of the codimension-$k$ part of $D$.  Coming together at $v$, there are $n$ different connected components of the codimension-$1$ part of $D$, so in $U$ every connected component of the codimension-$k$ part gets a different label.  Thus none of them can be part of the same connected component of the codimension-$k$ part of $D$.
\end{proof}

The final ingredient needed to prove Lemma~\ref{morse-goal}, which is the goal of this section, is the following lemma.  It relates the stratification $\mathcal{S}_1$, which comes from the structure of $\overline{\mathcal{D}(p)}$ as a manifold with corners, to the generalized stratification of $M$ corresponding to its CW structure.  Specifically, consider the evaluation map $e : \overline{\mathcal{D}(p)} \rightarrow M$ from Theorem~\ref{3.4}, and let $\mathcal{S}_2$ be the generalized stratification of $\overline{\mathcal{D}(p)}$ consisting of the space $e^{-1}(\mathcal{D}(r)) = \overline{\mathcal{M}(p, r)} \times \mathcal{D}(r)$ for each critical point $r$.  Note that every stratum of $\mathcal{S}_2$ is a union of strata from $\mathcal{S}_1$.  Figure~\ref{triangulated-disk} illustrates the fact that some simplices are essential with respect to $\mathcal{S}_1$ but are not essential with respect to $\mathcal{S}_2$.

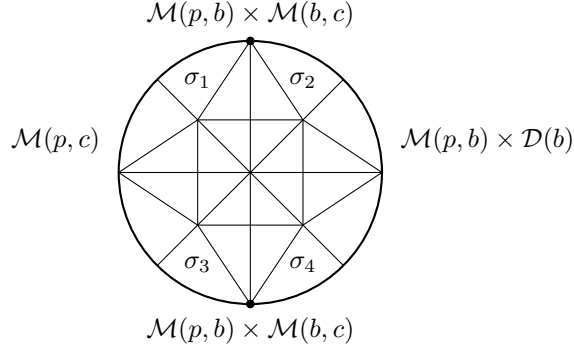
\begin{figure}
\begin{center}
\begin{tikzpicture}[scale = .35, vert/.style={circle, draw=black, fill=black, inner sep = 0pt, minimum size = 1mm}]
  \draw[thick] (0, 0) circle (5);

  \node[vert] at (0, 5) [label=above: ${\mathcal{M}(p, b)}\times {\mathcal{M}(b, c)}$] {};
  \node[vert] at (0, -5) [label=below: ${\mathcal{M}(p, b)}\times {\mathcal{M}(b, c)}$] {};
  \node at (5, 0) [label=above right: ${\mathcal{M}(p, b)} \times \mathcal{D}(b)$] {};
  \node at (-5, 0) [label=above left: ${\mathcal{M}(p, c)}$] {};

  \draw (-5, 0)--(5, 0);
  \draw (0, -5)--(0, 5);
  \draw (-3.536, -3.536)--(3.536, 3.536);
  \draw (-3.536, 3.536)--(3.536, -3.536);
  \draw (-2,-2) rectangle (2,2);
  \draw (-2,-2)--(-5, 0)--(-2,2)--(0,5)--(2,2)--(5,0)--(2,-2)--(0,-5)--(-2,-2);

  \node at (-2, 3.5) {$\sigma_1$};
  \node at (2,3.5) {$\sigma_2$};
  \node at (-2, -3.5) {$\sigma_3$};
  \node at (2, -3.5) {$\sigma_4$};
\end{tikzpicture}
\end{center}
\caption{In this stratified triangulation $c_2$ of the disk $\overline{\mathcal{D}(p)}$ from Figure~\ref{morse-disk}, the essential simplices with respect to stratification $\mathcal{S}_1$ (which consists of an open $2$-cell, two open $1$-cells, and two $0$-cells) are $\sigma_1$, $\sigma_2$, $\sigma_3$, and $\sigma_4$.  The essential simplices with respect to generalized stratification $\mathcal{S}_2$ (which consists of an open $2$-cell, an open $1$-cell shown on the right boundary, and a closed $1$-cell shown on the left boundary) are $\sigma_2$ and $\sigma_4$ only.}\label{triangulated-disk}
\end{figure}

\begin{lemma}\label{gray}
Let $(M, g)$ be a closed Riemannian manifold and suppose $f : (M, g) \rightarrow \mathbb{R}$ is Morse-Smale and Euclidean.  Let $p$ be any critical point.  Let $\mathcal{S}_1$ be the stratification of $\overline{\mathcal{D}(p)}$ consisting of the connected components of the codimension-$k$ part for each $k$, and let $\mathcal{S}_2$ be the stratification of $\overline{\mathcal{D}(p)}$ consisting of the space $\overline{\mathcal{M}(p, r)} \times \mathcal{D}(r)$ for each critical point $r$.  Then the number of totally ordered chains of $\ind(p)+1$ strata from $\mathcal{S}_1$ such that those strata belong to $\ind(p)+1$ different strata from $\mathcal{S}_2$ is the number of $\ind(p)$-part broken trajectories starting at $p$.
\end{lemma}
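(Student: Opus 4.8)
The plan is to produce an explicit bijection between the two sets being counted, built from the description of $\overline{\mathcal{D}(p)}$ in Theorem~\ref{3.4}. Write $k=\ind(p)$, so that $\overline{\mathcal{D}(p)}$ is a compact smooth manifold with faces of dimension $k$ whose codimension-$j$ part is $\bigsqcup_{\abs{I}=j-1}\mathcal{D}_I$. First I would pin down the two extreme kinds of strata. The unique codimension-$0$ stratum is $\mathcal{D}(p)$ itself, with index set $I=(p)$. A codimension-$k$ stratum is a connected component of some $\mathcal{D}_I$ with $\abs{I}=k-1$; since such a $\mathcal{D}_I$ is $0$-dimensional and is a product of the factors $\mathcal{M}(r_{i-1},r_i)$ and $\mathcal{D}(r_k)$, all of which have nonnegative dimension $\ind(r_{i-1})-\ind(r_i)-1$ and $\ind(r_k)$ respectively, each factor must be $0$-dimensional; hence the index set $I=(p=r_0,r_1,\dots,r_k)$ has $\ind(r_i)=k-i$, each $\mathcal{M}(r_{i-1},r_i)$ is a finite discrete set of flow lines, and $\mathcal{D}(r_k)$ is a single point. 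Thus the connected components of the codimension-$k$ part of $\overline{\mathcal{D}(p)}$---that is, its vertices---correspond exactly to the $k$-part broken trajectories beginning at $p$.

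Next I would record the incidence structure. Using the concatenation embeddings of Theorems~\ref{3.3} and~\ref{3.4} together with compactness (so that their images are closed), one gets that $\mathcal{D}_I$ lies in the closure of $\mathcal{D}_{I'}$ exactly when $I'$ is a subsequence of $I$, i.e.\ when $I$ is obtained from $I'$ by inserting intermediate critical points; refining this to connected components, a component of $\mathcal{D}_I$ is $\preceq$ a component of $\mathcal{D}_{I'}$ precisely when, in addition, the flow-line coordinates of the $\mathcal{D}_{I'}$-component are the restriction of those of the $\mathcal{D}_I$-component. I would also note which stratum of $\mathcal{S}_2$ contains a given stratum of $\mathcal{S}_1$: if $I$ ends at the critical point $r$, then $\mathcal{D}_I=\mathcal{M}_I\times\mathcal{D}(r)\subseteq\overline{\mathcal{M}(p,r)}\times\mathcal{D}(r)=e^{-1}(\mathcal{D}(r))$, so the $\mathcal{S}_2$-stratum of a component of $\mathcal{D}_I$ is determined by the last critical point of $I$.

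With this in hand the argument is combinatorial. A totally ordered chain of $k+1$ strata of $\mathcal{S}_1$ has strictly decreasing dimensions lying in $\{0,1,\dots,k\}$, hence equal to $k,k-1,\dots,0$, so it consists of exactly one stratum of each codimension; its top member is $\mathcal{D}(p)$ and its bottom member is a vertex $v$, i.e.\ a $k$-part broken trajectory $(p=q_k,q_{k-1},\dots,q_0;\gamma_k,\dots,\gamma_1)$. Such a chain corresponds to a chain of index subsequences $(p)=I_0\subsetneq I_1\subsetneq\cdots\subsetneq I_k$, each enlargement adjoining one critical point, together with compatible (nested) flow-line data. By the previous paragraph the $\mathcal{S}_2$-stratum of the codimension-$i$ member is governed by the last critical point $\ell_i$ of $I_i$, and $\ell_0,\dots,\ell_k$ are pairwise distinct if and only if each enlargement $I_i\subsetneq I_{i+1}$ strictly lowers the minimal index---equivalently, the critical points are adjoined in strictly decreasing order of index. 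Since $I_k$ contains critical points of every index $0,\dots,k$, this forces $I_i=(q_k,q_{k-1},\dots,q_{k-i})$, and then the incidence relations (localized near $v$, where by Lemma~\ref{faces} the local strata inject into the global ones) force the component at level $i$ to be the one carrying the flow lines $\gamma_k,\dots,\gamma_{k-i+1}$, because the remaining factor $\mathcal{D}(q_{k-i})$ is connected. Hence the whole chain is recovered from $v$; conversely this recipe turns any $k$-part broken trajectory beginning at $p$ into a totally ordered chain of $k+1$ strata of $\mathcal{S}_1$ whose members lie in the $k+1$ distinct $\mathcal{S}_2$-strata $e^{-1}(\mathcal{D}(q_k)),\dots,e^{-1}(\mathcal{D}(q_0))$. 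This bijection is the content of the lemma.

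The step I expect to be the main obstacle is the middle one: extracting from Theorems~\ref{3.3} and~\ref{3.4} the precise incidence and connected-component matching among the strata $\mathcal{D}_I$ inside $\overline{\mathcal{D}(p)}$---in particular checking that the closure of a given component of $\mathcal{D}_{I'}$ meets only those components of $\mathcal{D}_I$ (for $I'\subsetneq I$) whose flow-line coordinates extend those of the given component. Everything else is a dimension count and the combinatorics of refining a chain of subsequences.
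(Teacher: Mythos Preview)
Your proposal is correct and captures the same underlying idea as the paper---that the $\mathcal{S}_2$-stratum of a piece $\mathcal{D}_I$ is determined by the last critical point of $I$, so the distinctness condition forces the critical points to be adjoined in decreasing order of index---but the two arguments are organized differently. The paper proceeds by induction on $\ind(p)$: at each step it peels off the top stratum $\mathcal{D}(p)$, observes that the next stratum in the chain lies in some codimension-$1$ piece $\mathcal{M}(p,r)\times\mathcal{D}(r)$, and then splits into the case $\ind(r)=\ind(p)-1$ (where the embedding $\overline{\mathcal{M}(p,r)}\times\overline{\mathcal{D}(r)}\hookrightarrow\overline{\mathcal{D}(p)}$ reduces to the inductive hypothesis on $\overline{\mathcal{D}(r)}$) and the case $\ind(r)<\ind(p)-1$ (where a count of available $\mathcal{S}_2$-strata shows no admissible chain exists). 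You instead build a direct bijection anchored at the bottom vertex $v$, reading off the entire chain from the broken trajectory it represents.

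Your route is arguably more conceptual, but it front-loads exactly the technical point you flagged: you need the full incidence combinatorics of the $\mathcal{D}_I$, namely that $\overline{\mathcal{D}_{I'}}\supseteq\mathcal{D}_I$ iff $I'$ is a subsequence of $I$, together with the refinement to connected components. The paper's induction sidesteps this by using only the embedding statement from Theorem~\ref{3.4} as a black box at each step, never needing to name the closure of a general $\mathcal{D}_{I'}$. So the trade-off is: your argument is a one-shot bijection but relies on a finer structural claim about the face poset of $\overline{\mathcal{D}(p)}$; the paper's argument is slightly less direct but stays closer to the exact statements of Theorems~\ref{3.3} and~\ref{3.4}.
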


\begin{proof}
Roughly, a totally ordered chain of $\ind(p) + 1$ strata from $\mathcal{S}_1$ corresponds to an increasing sequence of sets of critical points at which the flow lines from $p$ may break.  For such a set of critical points, the corresponding stratum of $\mathcal{S}_2$ is determined by the critical point of least index.  Thus, in order to get $\ind(p) + 1$ different strata from $\mathcal{S}_2$, the critical points must be added in decreasing order of index.  The number of ways to make such a chain of $\mathcal{S}_1$-strata is the number of $\ind(p)$-part broken trajectories.

The proof is by induction on $\ind(p)$.  The base case is $\ind(p) = 0$, in which case $\overline{\mathcal{D}(p)}$ is one point.  For $\ind(p) > 0$, every totally ordered chain of $\ind(p) + 1$ strata from $\mathcal{S}_1$ consists of the maximal stratum $\mathcal{D}(p)$ along with some totally ordered chain of $\ind(p)$ strata with maximum equal to some codimension-$1$ stratum.  We know that the codimension-$1$ part of $\overline{\mathcal{D}(p)}$ consists of various $\mathcal{M}(p, r) \times \mathcal{D}(r)$, where $r$ is a critical point with $\ind(r) < \ind(p)$.

If $\ind(r) = \ind(p) - 1$, then $\mathcal{M}(p, r)$ is a finite set.  We know that concatenation gives an embedding $\overline{\mathcal{M}(p, r)} \times \overline{\mathcal{D}(r)} \rightarrow \overline{\mathcal{D}(p)}$, by Theorem~\ref{3.4}.  The space $\overline{\mathcal{M}(p, r)} \times \overline{\mathcal{D}(r)} = \mathcal{M}(p, r) \times \overline{\mathcal{D}(r)}$ is a finite disjoint union of copies of $\overline{\mathcal{D}(r)}$, so any totally ordered chain of strata with maximum equal to some stratum in $\mathcal{M}(p, r) \times \mathcal{D}(r)$ is completely contained in one of the copies of $\overline{\mathcal{D}(r)}$.  We apply the inductive hypothesis to each $\overline{\mathcal{D}(r)}$.  Then, the $\mathcal{S}_2$-stratum $\mathcal{D}(p)$ is different from every $\mathcal{S}_2$-stratum intersecting any $\overline{\mathcal{D}(r)}$, so the number of totally ordered chains of $\ind(p) + 1$ strata from $\mathcal{S}_1$, with maximum $\mathcal{D}(p)$ and next stratum in $\mathcal{M}(p, r) \times \mathcal{D}(r)$, such that these strata belong to $\ind(p) + 1$ different strata from $\mathcal{S}_2$, is the number of $(\ind(p) + 1)$-part broken trajectories starting with $p$ and then $r$.

If, on the other hand, $\ind(r) < \ind(p) - 1$, then as before we have an embedding $\overline{\mathcal{M}(p, r)} \times \overline{\mathcal{D}(r)} \rightarrow \overline{\mathcal{D}(p)}$.  We claim that in $\overline{\mathcal{M}(p, r)} \times \overline{\mathcal{D}(r)}$ there are no totally ordered chains of $\ind(p)$ strata from $\mathcal{S}_1$ such that these strata belong to $\ind(p)$ different strata from $\mathcal{S}_2$.  First we observe that if two strata $S$ and $S'$ of $\mathcal{S}_1$ are related by $S \preceq S'$, then the strata of $\mathcal{S}_2$ containing $S$ and $S'$ satisfy the same relation.  Thus for every totally ordered chain of strata from $\mathcal{S}_1$, the corresponding strata from $\mathcal{S}_2$ are also totally ordered.  We also observe that for any two critical points $q_1$ and $q_2$ of the same index, the corresponding $\mathcal{S}_2$-strata $\overline{\mathcal{M}(p, q_1)}\times \mathcal{D}(q_1)$ and $\overline{\mathcal{M}(p, q_2)}\times \mathcal{D}(q_2)$ are incomparable.  This implies that the maximum possible length of a totally ordered chain of $\mathcal{S}_2$-strata intersecting $\overline{\mathcal{M}(p, r)} \times \overline{\mathcal{D}(r)}$ is $\ind(r) + 1$, which is less than $\ind(p)$.

Thus, back in $\overline{\mathcal{D}(p)}$, for every totally ordered chain of $\ind(p) + 1$ strata from $\mathcal{S}_1$ such that those strata belong to $\ind(p) + 1$ different strata from $\mathcal{S}_2$, the next-to-maximum stratum is in $\mathcal{M}(p, r) \times \mathcal{D}(r)$ for some critical point $r$ with $\ind(r) = \ind(p) - 1$.  Adding up the number of ways to do this for all such $r$, the total is the number of $\ind(p)$-part broken trajectories starting at $p$.
\end{proof}

\begin{proof}[Proof of Lemma~\ref{morse-goal}]
Theorems~\ref{3.4} and~\ref{ball} imply that $M$ is a CW-complex with the descending manifolds as open cells.  As a smooth manifold with corners, each $\overline{\mathcal{D}(p)}$ is a Whitney stratified space and therefore has a stratified triangulation (see~\cite{Goresky78} and \cite{Goresky88}; the former includes a definition of stratified triangulation, and the latter includes a definition of Whitney stratified space).  The stratified triangulation satisfies the \emph{cellular} property with respect to $\mathcal{S}_1$, and there are finitely many simplices because $\overline{\mathcal{D}(p)}$ is compact.  Let $c_{\rel}(p)$ be an iterated barycentric subdivision of this original triangulation.  

By Sublemma~\ref{bary}, the relative $(\overline{\mathcal{D}(p)}, \del \overline{\mathcal{D}(p)})$ cycle $c_{\rel}(p)$ satisfies the \emph{cellular}, \emph{order}, \emph{internality}, and \emph{loop} properties with respect to $\mathcal{S}_1$, so it also satisfies these four properties with respect to $\mathcal{S}_2$, or equivalently, as a relative $(M, M^{n-1})$ cycle with respect to $\mathcal{S}$.  By Lemmas~\ref{corner} and~\ref{faces}, the triangulation $c_{\rel}(p)$ has one essential simplex with respect to $\mathcal{S}_1$ for each totally ordered chain of $n+1$ strata from $\mathcal{S}_1$, so by Lemma~\ref{gray}, $c_{\rel}(p)$ has one essential simplex with respect to $\mathcal{S}_2$ (or equivalently $\mathcal{S}$) for each $n$-part broken trajectory beginning at $p$.  Thus we have
\[\Vert [\mathcal{D}(p)] \Vert_{\Delta, \ess}^\mathcal{S} \leq \Vert c_{\rel}(p) \Vert_{\Delta, \ess}^\mathcal{S} = \#(n\text{-part broken trajectories beginning at }p).\]
\end{proof}

\section{Main theorem}\label{end}

The main theorem, Theorem~\ref{main-thm}, is an immediate corollary of the following theorem.  This more general theorem bounds the number of $n$-part broken trajectories in terms of the simplicial volume, instead of the hyperbolic volume, so it applies to all closed manifolds with nonzero simplicial volume.

\begin{theorem}\label{general}
Let $M$ be a closed, oriented manifold of dimension $n$.  Let $g$ be a Riemannian metric on $M$, and let $f : (M, g) \rightarrow \mathbb{R}$ be a Morse function satisfying Morse-Smale transversality.  Let $Z$ be any topological space such that its universal cover is contractible, and let $\alpha: M \rightarrow Z$ be a continuous map.  Then the fundamental homology class $[M]$ has $\alpha$-image in $H_n(Z)$ with simplicial norm satisfying the bound
\[\Vert \alpha_*[M]\Vert_{\Delta} \leq \#(n\text{-part broken trajectories of }{-\nabla f}).\]
\end{theorem}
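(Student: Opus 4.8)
The plan is to obtain Theorem~\ref{general} by assembling the two main lemmas of the preceding sections, Lemma~\ref{simplicial-goal} and Lemma~\ref{morse-goal}, after reducing to the case in which the pair $(f,g)$ is Euclidean.

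First I would carry out the reduction. Let $\Psi_s$ denote the flow of $-\nabla f$, and apply Theorem~\ref{reduction} to obtain a homeomorphism $h:M\to M$ such that $h\Psi_s h^{-1}$ is the flow of a Morse-Smale negative gradient vector field which in addition is Euclidean. Conjugation by $h$ carries critical points, descending and ascending manifolds, and unparametrized flow lines to the corresponding objects of the conjugated flow, and hence induces a bijection between the $n$-part broken trajectories of the two flows; in particular their numbers agree. Moreover, replacing the map $\alpha$ by $\alpha\circ h^{-1}:M\to Z$ changes $\alpha_*[M]$ only by the factor $(h^{-1})_*[M]=\pm[M]$, which does not affect the simplicial norm. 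Thus it suffices to prove the stated inequality when $(f,g)$ is Euclidean.

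Assume now that $(f,g)$ is Euclidean. By Lemma~\ref{morse-goal}, $M$ is a finite CW-complex whose open cells are the descending manifolds $\mathcal{D}(p)$; write $\mathcal{S}$ for the generalized stratification of $M$ by these open cells. For each critical point $p$ of index $n$, the proof of Lemma~\ref{morse-goal} constructs a relative $(M,M^{n-1})$ cycle $c_{\rel}(p)$ that represents $[\mathcal{D}(p)]\in H_n(M,M^{n-1})$, satisfies the \emph{cellular}, \emph{order}, \emph{internality}, and \emph{loop} conditions with respect to $\mathcal{S}$, and has $\Vert c_{\rel}(p)\Vert_{\Delta,\ess}^{\mathcal{S}}$ equal to the number of $n$-part broken trajectories beginning at $p$. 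Because $M$ is closed and oriented, the image $h_{\rel}\in H_n(M,M^{n-1})$ of the fundamental class $[M]$ is represented in cellular homology by a sum $\sum_p\varepsilon_p[\mathcal{D}(p)]$ over the index-$n$ critical points, with coefficients $\varepsilon_p=\pm1$. Hence the single relative cycle $c_{\rel}:=\sum_p\varepsilon_p\,c_{\rel}(p)$ represents $h_{\rel}$, still satisfies the four conditions---these are imposed on each individual simplex, and the \emph{loop} condition is vacuous since each $c_{\rel}(p)$ is an iterated barycentric subdivision---and satisfies $\Vert c_{\rel}\Vert_{\Delta,\ess}^{\mathcal{S}}\leq\sum_p\Vert c_{\rel}(p)\Vert_{\Delta,\ess}^{\mathcal{S}}$ by the triangle inequality on absolute values of coefficients. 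Now applying Lemma~\ref{simplicial-goal} with $X=M$, $j=n$, $h=[M]$, and the given map $\alpha$, and combining with these facts, gives
\[\Vert\alpha_*[M]\Vert_{\Delta}\;\leq\;\Vert h_{\rel}\Vert_{\Delta,\ess}^{\mathcal{S}}\;\leq\;\Vert c_{\rel}\Vert_{\Delta,\ess}^{\mathcal{S}}\;\leq\;\sum_p\#(n\text{-part broken trajectories beginning at }p),\]
and this sum equals $\#(n\text{-part broken trajectories of }{-\nabla f})$ because every $n$-part broken trajectory begins at a unique index-$n$ critical point.

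All of the analytic and homological substance is already contained in Lemmas~\ref{simplicial-goal} and~\ref{morse-goal}, so this final step is essentially bookkeeping. The point most worth attention is the claim that $[M]$ is carried by the top cells with unit coefficients: although standard, it deserves a line here because the CW structure in play is the Morse one rather than a triangulation, and the cleanest justification uses that the characteristic map $e:\overline{\mathcal{D}(p)}\to M$ of Theorem~\ref{3.4} restricts on the open cell to an orientation-preserving or orientation-reversing embedding onto an open subset of $M$. I also expect to want a sentence confirming that summing the cycles $c_{\rel}(p)$ creates no new self-loops in the associated $\Delta$-complex, so that the \emph{loop} condition---the only one of the four not manifestly local to single simplices---is preserved; this holds because each summand is a barycentrically subdivided simplicial cycle.
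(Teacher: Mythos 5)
Your proof is correct and follows the same route as the paper: reduce to the Euclidean case via Theorem~\ref{reduction}, write $[M]_{\rel}$ as a signed sum of the classes $[\mathcal{D}(p)]$ over index-$n$ critical points, bound each $\Vert[\mathcal{D}(p)]\Vert_{\Delta,\ess}^{\mathcal{S}}$ via Lemma~\ref{morse-goal}, and finish with Lemma~\ref{simplicial-goal}. The only superfluous step is the replacement of $\alpha$ by $\alpha\circ h^{-1}$: the conjugated flow and its descending-manifold CW structure live on the same underlying space $M$, and $\Vert\alpha_*[M]\Vert_\Delta$ does not depend on the Morse data, so $\alpha$ needs no adjustment after the reduction.
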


\begin{proof}
By Theorem~\ref{reduction} we may assume that $(f, g)$ is Euclidean.  Let $\mathcal{S}$ denote the generalized stratification of $M$ corresponding to the CW structure from the descending manifolds.  The relative class $[M]_{\rel} \in H_n(M, M^{n-1})$ corresponding to $[M]$ is equal to the sum of $n$-cells
\[[M]_{\rel} = \sum_{\ind(p) = n} [\mathcal{D}(p)],\]
and so by Lemma~\ref{morse-goal} its essential stratified simplicial norm satisfies the bound
\[\Vert [M]_{\rel} \Vert_{\Delta, \ess}^{\mathcal{S}} \leq \sum_{\ind(p) = n} \Vert [\mathcal{D}(p)] \Vert_{\Delta, \ess}^{\mathcal{S}} \leq \#(n\text{-part broken trajectories}).\]
Applying Lemma~\ref{simplicial-goal}, we have the inequality
\[\Vert \alpha_*[M]\Vert_\Delta \leq \Vert [M]_{\rel} \Vert_{\Delta, \ess}^{\mathcal{S}},\]
which proves the theorem.
\end{proof}

\begin{proof}[Proof of Theorem~\ref{main-thm}]
Suppose $M$ is hyperbolic.  For $Z$ we take $M$, and for $\alpha : M \rightarrow Z$ we take the identity map on $M$.  Combining Theorem~\ref{general} with the formula relating simplicial norm to hyperbolic volume, we obtain
\[\frac{\Vol(M, \hyp)}{\Vol \Delta^n} = \Vert [M] \Vert_{\Delta} \leq \#(n\text{-part broken trajectories}).\]
\end{proof}

\bibliography{bib-morse}{}
\bibliographystyle{amsalpha}
\end{document}